\documentclass[12pt]{amsart}
\usepackage{amsmath,amssymb}
\usepackage{amsthm}
\usepackage{hyperref}
\usepackage{xpatch}
\xpatchcmd{\qed}{\hfill}{}{}{}
\usepackage{mathrsfs}
\usepackage{theoremref}
\usepackage{enumerate}
\usepackage{vmargin}

\setmargrb{1in}{1in}{1in}{1in}
\usepackage{xspace}

\newcommand{\A}{\mathbb{A}}

\newcommand{\F}{\mathbb{F}}

\newcommand{\N}{\mathbb{N}}

\newcommand{\Z}{\mathbb{Z}}

\newcommand{\ccD}{\mathcal{D}}

\newcommand{\cN}{\mathcal{N}}
\newcommand{\cO}{\mathcal{O}}

\newcommand{\fg}{\mathfrak{g}}

\newcommand{\fL}{\mathfrak{L}}
\newcommand{\fm}{\mathfrak{m}}

\newcommand{\fn}{\mathfrak{n}}

\newcommand{\fz}{\mathfrak{z}}

\let\OldS\S
\renewcommand{\S}{\OldS\xspace}

\DeclareMathOperator{\ad}{ad}

\DeclareMathOperator{\Aut}{Aut}

\DeclareMathOperator{\Der}{Der}

\DeclareMathOperator{\GL}{GL}

\DeclareMathOperator{\id}{id}

\DeclareMathOperator{\Lie}{Lie}

\DeclareMathOperator{\modd}{mod}

\DeclareMathOperator{\del}{\partial}

\makeatletter

\usepackage{titlesec}

  \titleformat{\section}
  {\normalfont\bfseries\protect}
  {\thesection.}
  {0.5em}
  {}
  \titleformat{\subsection}[runin]
  {\normalfont\bfseries\protect}
  {\thesubsection.}
  {0.5em}
  {}

  \titleformat{\subsubsection}[runin]
  {\normalfont\bfseries\protect}
  {\thesubsubsection.}
  {0.5em}
  {}
  
\numberwithin{equation}{section}

\renewenvironment{proof}[1][\proofname]{%
   \par\pushQED{\qed}\normalfont%
   \topsep6\p@\@plus6\p@\relax
   \trivlist\item[\hskip\labelsep\bfseries#1\@addpunct{.}]%
   \ignorespaces
   }{%
   \popQED\endtrivlist\@endpefalse
}

\newtheorem{thm}{Theorem}[section]
\newtheorem{thm*}{Theorem}
\newtheorem{pro}{Proposition}[section]
\newtheorem{lem}{Lemma}[section]

\newtheorem{cor}{Corollary}[section]

\theoremstyle{definition}

\theoremstyle{definition}
\newtheorem{rmk}{Remark}[section]

\newtheorem{ex*}{Example}

\begin{document}
\title{The Nilpotent Variety of $W(1;n)_{p}$ is irreducible}
\author{Cong Chen}
\address{School of Mathematics, The University of Manchester, Oxford Road, M13 9PL, UK}
\email{cong.chen@manchester.ac.uk}

\begin{abstract}
In the late 1980s, Premet conjectured that the nilpotent variety of any finite dimensional restricted Lie algebra over an algebraically closed field of characteristic $p>0$ is irreducible. This conjecture remains open, but it is known to hold for a large class of simple restricted Lie algebras, e.g. for Lie algebras of connected reductive algebraic groups, and for Cartan series $W, S$ and $H$. In this paper, with the assumption that $p>3$, we confirm this conjecture for the minimal $p$-envelope $W(1;n)_p$ of the Zassenhaus algebra $W(1;n)$ for all $n\geq 2$. 
\end{abstract}

\maketitle

\section{Introduction}
Let $k$ be an algebraically closed field of characteristic $p>0$, and let $\fg$ be a finite dimensional restricted Lie algebra over $k$ with $[p]$-th power map $x\mapsto x^{[p]}$. The nilpotent variety $\cN(\fg)$ is the set of all $x\in\fg$ such that $x^{[p]^{N}}=0$ for $N\gg 0$. It is well known that $\cN(\fg)$ is Zariski closed in $\fg$, and it can be presented as a finite union $\cN(\fg)=Z_1\cup Z_2\cup \dots \cup Z_t$ of pairwise distinct irreducible components $Z_i$. In \cite{P90}, Premet conjectured that the variety $\cN(\fg)$ is irreducible, i.e. $t=1$ in this decomposition. The evidence to support this conjecture is that if $\fg$ is the Lie algebra of a connected reductive algebraic group $G'$, and $\fn$ is the set of nilpotent elements in a Borel subalgebra of $\fg$, then $\cN(\fg) =\{g.n \,|\, g\in G', n \in \fn\}$. Since $G'$ is connected and $\fn$ is irreducible, the variety $\cN(\fg)$ is irreducible \cite[p.~64]{J04}. Moreover, this conjecture holds for the Jacobson-Witt algebra $W(n; \underline{1})$ \cite{P91}, for the Special Lie algebras $S(n; \underline{1})$ \cite{WCL13} and for the Hamiltonian Lie algebras $H(2n; \underline{1})$ \cite{W14}. In this paper, we are interested in the minimal $p$-envelope of the Zassenhaus algebra.

Throughout this paper we assume that $k$ is an algebraically closed field of characteristic $p>3$, and $n\in \N$. The divided power algebra $\cO(1;n)$ has a $k$-basis $\{x^{(a)}\,|\, 0\leq a \leq p^{n}-1\}$, and the product in $\cO(1;n)$ is given by $x^{(a)}x^{(b)} = \binom {a + b}{a} x^{(a + b)}$ if $0\leq a+b \leq p^n-1$ and $0$ otherwise. We write $x^{(1)}$ as $x$. Note that $\cO(1;n)$ is a local algebra with the unique maximal ideal $\fm$ spanned by all $x^{(a)}$ such that $a\geq 1$. A system of divided powers is defined on $\fm$, $f\mapsto f^{(r)} \in \cO(1;n)$ where $r\geq0$; see \cite[Definition~2.1.1]{S04}. An automorphism $\Phi$ of $\cO(1;n)$ is called \textit{admissible} if $\Phi(f^{(r)})=\Phi(f)^{(r)}$ for all $f\in \fm$ and $r\geq 0$. Let $G$ denote the group of all admissible automorphisms of $\cO(1;n)$. It is well known that $G$ is a connected algebraic group of dimension $p^n-n$ \cite[Theorem~2]{W71}.

A derivation $\ccD$ of $\cO(1; n)$ is called \textit{special} if $\ccD(x^{(a)})=x^{(a-1)}\ccD(x)$ for $1\leq a \leq p^{n}-1$ and $0$ otherwise. The set of all special derivations of $\cO(1;n)$ forms a Lie subalgebra of $\Der(\cO(1;n))$ denoted $\fL=W(1;n)$ and called the \textit{Zassenhaus algebra}. It is well known that $\fL$ is a free $\cO(1;n)$-module of rank $1$ generated by the special derivation $\del$ such that $\del(x^{(a)})=x^{(a-1)}$ if $1\leq a \leq p^{n}-1$ and $0$ otherwise \cite[Ch. 4, Proposition~2.2(1)]{S88}. When $n=1$, $\fL$ coincides with the Witt algebra $W(1;1):=\Der (\cO(1;1))$, a simple and restricted Lie algebra. When $n\geq 2$, $\fL$ provides the first example of a simple, non-restricted Lie algebra \cite[Ch. 4, Theorem~2.4(1)]{S88}. From now on we always assume that $n\geq 2$. By \cite[Theorem~12.8]{R56}, any automorphism of $\fL$ is induced by a unique admissible automorphism of $\cO(1;n)$ so that $\Aut(\fL)\cong G$ as algebraic groups.

Let $\fL_p=W(1;n)_p$ denote the $p$-envelope of $\fL\cong \ad \fL$ in $\Der(\fL)$.  This semisimple restricted Lie algebra is referred to as the \textit{minimal $p$-envelope} of $\fL$. Recent studies have shown that the variety $\cN(\fL):=\cN(\fL_p)\cap \fL$ is reducible \cite{YS16}. So investigating the variety $\cN(\fL_p)$ becomes critical for verifying Premet's conjecture. Our main result is the following theorem.

\begin{thm}\thlabel{main}
The variety $\cN(\fL_p)$ coincides with the Zariski closure of
\[
\cN_\text{reg}:=G.(k^*\del+k\del^p+\dots+k\del^{p^{n-1}})
\] and hence is irreducible.
\end{thm}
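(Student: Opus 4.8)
The plan is to establish the two inclusions $\overline{\cN_{\mathrm{reg}}}\subseteq\cN(\fL_p)$ and $\cN(\fL_p)\subseteq\overline{\cN_{\mathrm{reg}}}$; the first is routine and the second is the heart of the matter. Set $T:=k\del+k\del^p+\dots+k\del^{p^{n-1}}$ and let $T^{\circ}\subseteq T$ be the open subset on which the $\del$-coefficient is nonzero, so that $\cN_{\mathrm{reg}}=G.T^{\circ}$. First I would record that $T$ is an $n$-dimensional commutative restricted subalgebra consisting of $[p]$-nilpotent elements: the $[p]$-map carries $\del^{p^i}$ to $\del^{p^{i+1}}$ and $\del^{[p]^{n}}=\del^{p^{n}}=0$ since $\del^{p^n}$ kills every $x^{(a)}$ with $a\le p^n-1$, while on a commutative restricted algebra $x\mapsto x^{[p]}$ is additive and $p$-semilinear, so $\bigl(\sum_i c_i\del^{p^i}\bigr)^{[p]^n}=\sum_i c_i^{p^n}\del^{p^{i+n}}=0$. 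Because $G\cong\Aut(\fL)$ acts on $\fL_p$ by restricted automorphisms it preserves $\cN(\fL_p)$, whence $\cN_{\mathrm{reg}}\subseteq\cN(\fL_p)$; and since $G$ is connected and $T^{\circ}$ is irreducible, $\cN_{\mathrm{reg}}$ is the image of an irreducible variety under the action morphism, hence irreducible, and so is $\overline{\cN_{\mathrm{reg}}}$. As $\cN(\fL_p)$ is closed, this yields the easy inclusion together with the irreducibility of the candidate.

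Next I would compute dimensions. The stabiliser of $\del$ in $G$ is trivial, because an admissible automorphism commuting with $\del$ must fix $x$ and hence be the identity; thus $\dim G.\del=\dim G=p^n-n$. Differentiating the orbit map $G\times T\to\fL_p$ at $(1,\del)$ yields the image $[\Lie(G),\del]+T$, and as $[\Lie(G),\del]\subseteq\fL$ meets $T$ exactly in $k\del$, this space has dimension $(p^n-n)+(n-1)=p^n-1$. Hence $\dim\overline{\cN_{\mathrm{reg}}}\ge p^n-1$. I would then match this against the value $\dim\cN(\fL_p)=\dim\fL_p-\mathrm{MT}(\fL_p)=(p^n+n-1)-n=p^n-1$ predicted by the maximal toral rank $\mathrm{MT}(\fL_p)=n$ of $\fL_p$; this identifies $\overline{\cN_{\mathrm{reg}}}$ as an irreducible component of $\cN(\fL_p)$ of full dimension, and reduces the theorem to showing that there is no other component, i.e. to the reverse inclusion.

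For the reverse inclusion I would first reduce it to a conjugacy statement. Every $t=\sum_i c_i\del^{p^i}\in T$ is the limit as $\epsilon\to0$ of $\epsilon\del+t\in\cN_{\mathrm{reg}}$, so $T\subseteq\overline{\cN_{\mathrm{reg}}}$; as $\cN_{\mathrm{reg}}$ is $G$-stable, so is $\overline{\cN_{\mathrm{reg}}}$, giving $\overline{G.T}=\overline{\cN_{\mathrm{reg}}}$. Thus the theorem is equivalent to the assertion that every nilpotent element of $\fL_p$ lies in $\overline{G.T}$, for which it suffices to show that each $x\in\cN(\fL_p)$ is, up to a degeneration, $G$-conjugate into the abelian subalgebra $T$. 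This is the Cartan-type analogue of the reductive fact that every nilpotent element is conjugate into the nilradical of a fixed Borel subalgebra.

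The hard part will be this conjugacy statement. Writing $x=f\del+\sum_{i\ge1}d_i\del^{p^i}$ with $f\in\cO(1;n)$, I would use the Jacobson-type identity $(f\del)^{[p]}=f(0)^p\del^{p}+(f\del)^{p-1}(f)\,\del$ — where $f^p=f(0)^p$ because $g^p=g(0)^p$ for every $g\in\cO(1;n)$ — to track how the $\del^{p^i}$-components of $x$ are produced and annihilated under the $[p]$-map, and I would combine the standard filtration of $\fL$ with the grading cocharacter $\G_m\hookrightarrow G$, $x^{(a)}\mapsto t^{a}x^{(a)}$, to normalise $x$. The genuine obstacle is that the maximal tori, equivalently the Cartan subalgebras, of $\fL_p$ form several $G$-conjugacy classes, so there is no single subalgebra into which all nilpotent elements conjugate; one must invoke the classification of tori and nilpotent $G$-orbits in $W(1;n)_p$ to prove that the regular orbit closure nonetheless absorbs every nilpotent element, and in particular to control the inseparable directions $\del^{p^i}$ that have no analogue inside $\fL$. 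Proving that every nilpotent $G$-orbit degenerates into $\overline{\cN_{\mathrm{reg}}}$ is where essentially all the work resides.
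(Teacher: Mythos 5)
Your first two paragraphs are sound and essentially reproduce the easy half of the paper's argument: $\overline{\cN_{\mathrm{reg}}}$ is irreducible, lies in $\cN(\fL_p)$, and the differential of the orbit map at $(1,\del)$ (the paper works at a smooth point $\del+\sum_i\lambda_i\del^{p^i}$ and uses $\fz_\fL(\ccD)\cap\Lie(G)=\{0\}$) shows it is an irreducible component of dimension $p^n-1=\dim\cN(\fL_p)$. But the hard direction is not proved; it is only restated. Saying that it ``suffices to show that each $x\in\cN(\fL_p)$ is, up to a degeneration, $G$-conjugate into $T$'' is not a reduction — an element lies in $\overline{G.T}$ if and only if it is a degeneration of elements of $G.T$, so this \emph{is} the theorem. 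Your final paragraph then openly defers the entire content to an unspecified ``classification of tori and nilpotent $G$-orbits,'' which is not carried out and is not how the result is obtained; in fact the paper never shows that elements of $\cN\setminus\cN_{\mathrm{reg}}$ conjugate or degenerate into $T$ directly.

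The paper's actual mechanism for excluding other components rests on three ingredients absent from your proposal. First, it uses the \emph{equidimensionality} of $\cN(\fL_p)$ (every irreducible component has dimension exactly $p^n-1$, by Premet's theorem), not merely the value of $\dim\cN(\fL_p)$; with only the latter, a second component of smaller dimension could exist and your argument cannot rule it out. Second, it proves (via the long conjugation computations with admissible automorphisms, Lemmas 3.1--3.4) that $G.(k^*\del+k\del^p+\dots+k\del^{p^{n-1}})$ equals the set $\{\ccD\in\cN \mid \ccD^{p^{n-1}}\notin\fL_{(0)}\}$, which makes the complement $\cN_{\mathrm{sing}}=\{\ccD\in\cN \mid \ccD^{p^{n-1}}\in\fL_{(0)}\}$ visibly Zariski closed. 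Third — and this is the real novelty — it bounds $\dim\cN_{\mathrm{sing}}\le p^n-2$ by exhibiting an $(n+1)$-dimensional subspace $V=\langle e_0\rangle_p\oplus ku$ with $V\cap\cN_{\mathrm{sing}}=\{0\}$, built from the Zassenhaus presentation $\{e_\alpha \mid \alpha\in\F_q\}$ and an eigenvector analysis of the automorphism $e_\alpha\mapsto\xi^{-1}e_{\xi\alpha}$, and then invokes the standard fact that a closed cone of codimension $\le n$ in $\fL_p$ would meet every $(n+1)$-dimensional linear subspace nontrivially. Given these, any second component $Z_2$ satisfies $Z_2\subseteq\cN_{\mathrm{sing}}$, so $\dim Z_2<p^n-1$, contradicting equidimensionality. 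Without some replacement for this codimension estimate on the singular locus, your proposal does not close the gap.
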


Our paper is organized as follows. In Section $2$, we recall some basic results on $\fL$ and $\fL_p$. In Section $3$, we study some nilpotent elements of $\fL_p$ and then prove the main result. The proof is similar to Premet's proof for the Jacobson-Witt algebra $W(n;\underline{1})$\cite{P91}. It relies on the fact that the variety $\cN(\fL_p)$ is equidimensional of dimension $p^n-1$; see \cite[Theorem 4.2]{P03} and \cite[Theorem~7.6.3(2)]{S04}. Then we need to prove that $\cN_\text{sing}:=\cN(\fL_p)\setminus\cN_\text{reg}$ is Zariski closed of codimension $\geq n+1$ in $\fL_p$ by constructing an $(n+1)$-dimensional subspace $V$ in $\fL_p$ such that $V\cap \cN_\text{sing}=\{0\}$. The $(n+1)$-dimensional subspace used in $W(n;\underline{1})$ has no obvious analogue for $\fL_p$. Therefore, a new $V$ is constructed using the original definition of $\fL$ due to H.~Zassenhaus. In general, constructing analogues of $V$ for the minimal $p$-envelopes of $W(n;\underline{m})$, where $\underline{m}=(m_1, \dots, m_n)$ and $m_i>1$ for some $i$, would enable one to check Premet's conjecture for this class of restricted Lie algebras.

\section{Preliminaries}
\subsection{}\label{2.1}
Let $k$ be an algebraically closed field of characteristic $p>3$ and $n\in \N$. The divided power algebra $\cO(1;n)$ has a $k$-basis $\{x^{(a)}\,|\, 0\leq a \leq p^{n}-1\}$, and the product in $\cO(1;n)$ is given by $x^{(a)}x^{(b)} = \binom {a + b}{a} x^{(a + b)}$ if $0\leq a+b \leq p^n-1$ and $0$ otherwise. In the following, we write $x^{(1)}$ as $x$. It is straightforward to see that $\cO(1;n)$ is a local algebra with the unique maximal ideal $\fm$ spanned by all $x^{(a)}$ such that $a\geq 1$. A system of divided powers is defined on $\fm$, $f\mapsto f^{(r)} \in \cO(1;n)$ where $r\geq0$; see \cite[Definition~2.1.1]{S04}.

A derivation $\ccD$ of $\cO(1; n)$ is called \textit{special} if $\ccD(x^{(a)})=x^{(a-1)}\ccD(x)$ for $1\leq a \leq p^{n}-1$ and $0$ otherwise. The set of all special derivations of $\cO(1;n)$ forms a Lie subalgebra of $\Der(\cO(1;n))$ denoted $\fL=W(1;n)$ and called the \textit{Zassenhaus algebra}. When $n=1$, $\fL$ coincides with the Witt algebra $W(1;1):=\Der (\cO(1;1))$, a simple and restricted Lie algebra. When $n\geq 2$, $\fL$ provides the first example of a simple, non-restricted Lie algebra \cite[Ch. 4, Theorem~2.4(1)]{S88}. From now on we always assume that $n\geq 2$. 

The Zassenhaus algebra $\fL$ admits an $\cO(1;n)$-module structure via $(f\ccD)(x)=f\ccD(x)$ for all $f\in \cO(1;n)$ and $\ccD\in \fL$. Since each $\ccD\in \fL$ is uniquely determined by its effect on $x$, it is easy to see that $\fL$ is a free $\cO(1;n)$-module of rank $1$ generated by the special derivation $\del$ such that $\del(x^{(a)})=x^{(a-1)}$ if $1\leq a \leq p^{n}-1$ and $0$ otherwise \cite[Ch. 4, Proposition~2.2(1)]{S88}. Hence the Lie bracket in $\fL$ is given by $[x^{(i)}\del,x^{(j)}\del]=\big(\binom {i+j-1}{i}-\binom {i+j-1}{j}\big)x^{(i+j-1)}\del$ if $1\leq i+j \leq p^{n}$ and $0$ otherwise.

There is a $\Z$-grading on $\fL$, i.e. $\fL=\bigoplus_{i=-1}^{p^{n}-2} kd_{i}$ with $d_{i}:=x^{(i+1)}\partial$. Put $\fL_{(i)}:=\bigoplus_{j \geq i}^{p^{n}-2} kd_{j}$ for $-1\leq i\leq p^{n}-2$. Then this  $\Z$-grading induces a natural filtration
\[
\fL=\fL_{(-1)}\supset\fL_{(0)}\supset\fL_{(1)}\supset\dots \supset\fL_{(p^n-2)}\supset 0
\]
on $\fL$. It is known that
\begin{align}\label{eip}
d_{i}^p=
\begin{cases}
d_{i}, &\text{if $i=0$,}\\
d_{pi}, &\text{if $i=p^{t}-1$ for some $1\leq t\leq n-1$,}\\
0, &\text{otherwise}.
\end{cases}
\end{align}
In particular, $\fL_{(0)}$ is a restricted subalgebra of $\Der(\cO(1;n))$, $kd_0$ is a $1$-dimensional torus in $\fL_{(0)}$ and $\fL_{(1)}=\,\text{nil\,}(\fL_{(0)})$; see \cite[p.~3]{YS16}.

It is also useful to mention that the Zassenhaus algebra $\fL$ has another presentation. Let $q=p^n$ and let $\F_q \subset k$ be the set of all roots of $x^q-x=0$. This is a finite field of $q$ elements. Then $\fL$ has a $k$-basis $\{e_{\alpha}\,|\, \alpha\in \F_q\}$ with the Lie bracket given by $[e_\alpha, e_\beta]=(\beta-\alpha)e_{\alpha+\beta}$ \cite[Theorem~7.6.3(1)]{S04}. 

Let $\fL_p=W(1;n)_p$ denote the $p$-envelope of $\fL\cong \ad \fL$ in $\Der(\fL)$. This semisimple restricted Lie algebra is referred to as the \textit{minimal $p$-envelope} of $\fL$. By \cite[Theorems~7.1.2(1) and 7.2.2(1)]{S04}, we see that $\fL_p$ coincides with $\Der(\fL)=\fL +\sum_{i=1}^{n-1}k\partial^{p^{i}}$. Here we identify $\fL$ with $\ad \fL\subset \Der(\fL)$ and regard $\del^{p^{n}}$ as $0$. Then $\dim \fL_p=p^n+(n-1)$. 

Let $\cN$ denote the variety of nilpotent elements in $\fL_p$. It is well known that $\cN$ is Zariski closed in $\fL_p$. One should note that the maximal dimension of toral subalgebras in $\fL_p$ equals $n$ \cite[Theorem~7.6.3(2)]{S04}. Moreover, $\fL_p$ possesses a toral Cartan subalgebra; see \cite[p.~555]{P90}. Hence the set of all semisimple elements of $\fL_p$ is Zariski dense in $\fL_p$; see \cite[Theorem~2]{P87}. It follows from these facts, \cite[Corollary~2]{P90} and \cite[Theorem~4.2]{P03} that there exist nonzero homogeneous polynomial functions $\varphi_0, \dots, \varphi_{n-1}$ on $\fL_p$ such that $\cN$ coincides with the set of all common zeros of $\varphi_0, \dots, \varphi_{n-1}$. The variety $\cN$ is equidimensional of dimension $p^n-1$. Furthermore, any $\ccD \in \cN$ satisfies $\ccD^{p^{n}}=0$.

\subsection{}\label{2.2}
An automorphism $\Phi$ of $\cO(1;n)$ is called \textit{admissible} if $\Phi(f^{(r)})=\Phi(f)^{(r)}$ for all $f\in \fm$ and $r\geq 0$. Let $G$ denote the group of all admissible automorphisms of $\cO(1;n)$. It is well known that $G$ is a connected algebraic group, and each $\Phi \in G$ is uniquely determined by its effect on $x$. By \cite[Lemmas~8, 9 and 10]{W71}, an assignment $\Phi(x):=y=\sum_{i=1}^{p^{n}-1}\alpha_{i}x^{(i)}$ with $\alpha_{i}\in k$ such that $\alpha_{1}\neq 0$ and $\alpha_{p^{i}}=0$ for $1\leq i\leq n-1$ extends to an admissible automorphism of $\cO(1;n)$. Conversely, for any $y\in \fm$ as above, there is a unique $\Phi\in G$ such that $\Phi(x)=y$ \cite[Corollary~1]{W71}. Hence $\dim G=p^{n}-n$; see also \cite[Theorem~2]{W71}.

Any automorphism of the Zassenhaus algebra $\fL$ is induced by a unique admissible automorphism $\Phi$ of $\cO(1;n)$ via the rule $\ccD^{\Phi}=\Phi \ccD \Phi^{-1}$, where $\ccD\in \fL$ \cite[Theorem~12.8]{R56}. So from now on we shall identify $G$ with the automorphism group $\Aut(\fL)$. It is known that $G$ respects the natural filtration of $\fL$. In \cite{T98}, Tyurin stated explicitly that if $\Phi\in G$ is such that $\Phi(x)=y$, then $\Phi(g(x)\partial)=(y')^{-1}g(y)\partial$ for any $g(x)\in \cO(1;n)$. Extend this by defining $\Phi(\del^{p^{i}})=\Phi(\del)^{p^{i}}$ for $1\leq i\leq n-1$ one gets an automorphism of $\fL_p$. 

It follows from the above description of $G$ that $\Lie(G)\subseteq \fL_{(0)}$. More precisely, 
\begin{lem}\thlabel{LieG}
The set $\{d_i\,|\,\text{$0 \leq i \leq p^n-2$ and $i\neq p^t-1$ for $1\leq t\leq n-1$}\}$ forms a $k$-basis of $\Lie(G)$.
\end{lem}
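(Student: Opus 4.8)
The plan is to compute $\Lie(G)=T_{\id}(G)$ explicitly from Wilson's parametrization of $G$ and then transport the resulting infinitesimal automorphisms into $\Der(\fL)=\fL_p$ via the conjugation action $\ccD\mapsto \Phi\ccD\Phi^{-1}$.

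First I would use the description of $G$ recalled above: the map $\Phi\mapsto\Phi(x)$ identifies $G$ with the set $U=\{\,y=\sum_{i=1}^{p^n-1}\alpha_i x^{(i)} : \alpha_1\neq 0,\ \alpha_{p^t}=0\ \text{for}\ 1\leq t\leq n-1\,\}$. This $U$ is the principal open subset $\{\alpha_1\neq 0\}$ of the linear subspace $L=\{\alpha_{p^t}=0\}$ of $\A^{p^n-1}$, and $\dim L=(p^n-1)-(n-1)=p^n-n$. Since the identity corresponds to the point $\alpha_1=1$ with all other $\alpha_i=0$, the tangent space is $T_{\id}(G)=T_{\id}(U)=L$; concretely, a tangent vector is given by $\Phi_\epsilon\in G(k[\epsilon])$ (with $\epsilon^2=0$) satisfying $\Phi_\epsilon(x)=x+\epsilon h$, where $h=\sum_{i=1}^{p^n-1}\beta_i x^{(i)}$ and $\beta_{p^t}=0$ for $1\leq t\leq n-1$.

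Next I would identify the derivation of $\fL$ attached to such a tangent vector. Differentiating the admissibility relations $\Phi_\epsilon(f^{(r)})=\Phi_\epsilon(f)^{(r)}$ shows that the infinitesimal generator of $\Phi_\epsilon$ is the special derivation $h\partial\in\fL$ (the unique element of $\fL$ sending $x\mapsto h$), so $\Phi_\epsilon=\id+\epsilon\,(h\partial)$ and $\Phi_\epsilon^{-1}=\id-\epsilon\,(h\partial)$ as operators on $\cO(1;n)\otimes_k k[\epsilon]$. Hence, for every $\ccD\in\fL$,
\[
\ccD^{\Phi_\epsilon}=\Phi_\epsilon\ccD\Phi_\epsilon^{-1}=\ccD+\epsilon\,[h\partial,\ccD],
\]
so the derivation induced by this tangent vector is $\ad(h\partial)$. (Equivalently, linearizing Tyurin's formula $\Phi_\epsilon(g(x)\partial)=(y_\epsilon')^{-1}g(y_\epsilon)\partial$ modulo $\epsilon^2$ yields the same answer.) Under the identification $\fL\cong\ad\fL\subseteq\Der(\fL)=\fL_p$, this shows
\[
\Lie(G)=\{\,h\partial : h=\textstyle\sum_{i=1}^{p^n-1}\beta_i x^{(i)},\ \beta_{p^t}=0\ \text{for}\ 1\leq t\leq n-1\,\}\subseteq\fL_{(0)},
\]
in agreement with the containment $\Lie(G)\subseteq\fL_{(0)}$ noted above.

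Finally I would read off the basis. Writing $h\partial=\sum_{i=1}^{p^n-1}\beta_i\,x^{(i)}\partial=\sum_{j=0}^{p^n-2}\beta_{j+1}d_j$, we see that $\Lie(G)$ is spanned by those $d_j$ with $0\leq j\leq p^n-2$ whose coefficient $\beta_{j+1}$ is unconstrained, and $\beta_{j+1}$ is forced to vanish precisely when $j+1=p^t$, i.e. when $j=p^t-1$ for some $1\leq t\leq n-1$. Since the $d_j$ are linearly independent, the set $\{d_i : 0\leq i\leq p^n-2,\ i\neq p^t-1\ \text{for}\ 1\leq t\leq n-1\}$ is therefore a basis of $\Lie(G)$; it has $(p^n-1)-(n-1)=p^n-n=\dim G$ elements, as it must. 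The one genuine computation is the first-order identity $\ccD^{\Phi_\epsilon}=\ccD+\epsilon[h\partial,\ccD]$ (equivalently the $\epsilon$-linearization of Tyurin's formula), and the only point needing care is verifying that the constraint $\alpha_{p^t}=0$ linearizes to $\beta_{p^t}=0$ and hence excludes exactly the vectors $d_{p^t-1}$.
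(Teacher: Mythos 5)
Your proposal is correct, and it is a genuine (if modest) repackaging of the paper's argument rather than a copy of it. The paper works one basis vector at a time: for each admissible $i$ it defines a curve $\psi\colon\A^{1}\to G$, $t\mapsto(x\mapsto x+tx^{(i+1)})$, linearizes the admissibility relations $\Phi_t(x^{(p^{j})})=\Phi_t(x)^{(p^{j})}$ to see that $d_0\psi$ produces $d_i$, and then—having only the containment $\mathrm{span}\{d_i\}\subseteq\Lie(G)$—closes with the dimension count $\dim\Lie(G)=\dim G=p^{n}-n$. You instead compute $T_{\id}(G)$ all at once: Wilson's parametrization identifies $G$ with an open subset of the linear space $L=\{\alpha_{p^{t}}=0\}$, so $\Lie(G)=L$ on the nose, and the dual-number (equivalently, Tyurin-formula) linearization identifies each $h\in L$ with $\ad(h\del)\in\Der(\fL)$, so you get the equality $\Lie(G)=\mathrm{span}\{d_i\}$ directly, with the count $p^{n}-n$ serving only as a sanity check. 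The core computation is the same in both proofs—the first-order expansion of the divided-power relations, under which $(x+\epsilon h)^{(a)}=x^{(a)}+\epsilon x^{(a-1)}h$ forces the infinitesimal generator to be the special derivation $h\del$, and the constraint $\alpha_{p^{t}}=0$ linearizes to $\beta_{p^{t}}=0$, excluding exactly $d_{p^{t}-1}$—and both proofs ultimately rest on the same input, namely the bijection $G\leftrightarrow U$ recalled in \S 2.2 (the paper needs it too, since that is where $\dim G=p^{n}-n$ comes from). What your version buys: you avoid the dimension-count step, and your identity $\ccD^{\Phi_\epsilon}=\ccD+\epsilon[h\del,\ccD]$ makes explicit how $\Lie(G)$ sits inside $\fL_p$ as the inner derivations $\ad(h\del)$, a point the paper's proof leaves implicit when it matches the action of $d_0\psi(t)$ against that of $d_i$ only on $x$ and the $x^{(p^{j})}$; this explicit identification is in fact what the later arguments (e.g. $T_{\ccD}(G.\ccD)=[\ccD,\Lie(G)]$) actually use. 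What the paper's version buys: it is closer to bare hands, needing only that each individual curve lies in $G$ plus the dimension formula, and so it sidesteps any discussion of $k[\epsilon]$-points of $G$—a point where your first formulation is slightly informal, though your parenthetical fallback to linearizing Tyurin's polynomial formula makes it rigorous.
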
 
\begin{proof}
Let $\psi :\A^{1}\rightarrow G$ be the map defined by $t\mapsto (x\mapsto x+tx^{(i+1)})$, where $0 \leq i \leq p^n-2$ and $i\neq p^t-1$ for $1\leq t\leq n-1$. It is easy to check that $\psi$ is a morphism of algebraic varieties. Then the differential $d_0\psi$ of $\psi$ at $0$ is the map $d_0\psi: k \rightarrow \Lie(G)$. So $d_0\psi(k)\subseteq \Lie(G)$.

Let us compute $d_0\psi(k)$. The morphism $\psi$ sends $\A^{1}$ to the set of admissible automorphisms $\{\Phi_t\,|\, t \in \A^{1}\}$, where $\Phi_t(x)=x+tx^{(i+1)}$. Since $\Phi_t$ is uniquely determined by its effect on $x$ and ``admissible'' is equivalent to the condition that $\Phi_t(x^{(p^{j})})=\Phi_t(x)^{(p^{j})}$ for $1\leq j\leq n-1$; see \cite[Lemma~8]{W71}. Then by \cite[Definition~2.1.1]{S04} we have that 
\begin{align*}
\Phi_t(x^{(p^{j})})&=(x+tx^{(i+1)})^{(p^{j})}=x^{(p^{j})}+tx^{(p^{j}-1)}x^{(i+1)}+\text{terms of higher degree in $t$}.
\end{align*} 
Passing to $d_0\psi(t)$ we get
\begin{align*}
x&\mapsto x^{(i+1)},\\
x^{(p^{j})}&\mapsto x^{(p^{j}-1)}x^{(i+1)}.
\end{align*}
These results are the same as $d_i=x^{(i+1)}\del$ acting on $x$ and $x^{(p^{j})}$, respectively. Hence $d_i\in d_0\psi(t)\subseteq d_0\psi(k)$. Note that $\{d_i\,|\,\text{$0 \leq i \leq p^n-2$ and $i\neq p^t-1$ for $1\leq t\leq n-1$}\}$ is a set of $p^n-n$ linearly independent vectors. Since $\dim\Lie(G)=\dim G=p^n-n$, they form a basis of $\Lie(G)$. This completes the proof. 
\end{proof}

\section{The variety \texorpdfstring{$\cN$}{N}}
\subsection{} In~\S\ref{2.1}, we observed that any elements of $\fL_{(1)}$ are nilpotent, but they do not tell us much information about $\cN$. The interesting nilpotent elements are contained in the complement of $\fL_{(1)}$ in $\cN$, denoted $\cN\setminus\fL_{(1)}$. They are of the form $\sum_{i=0}^{n-1}\alpha_{i}\partial^{{p^{i}}}+f(x)\partial$ for some $f(x)\in \fm$ and $\alpha_i\in k$ with at least one $\alpha_{i}\neq 0$. In this subsection, we study elements of this form.

\begin{lem}\thlabel{l:l13}
Let $\ccD=\partial^{p^{n-1}}+\sum_{i=0}^{n-2}\beta_{i}\partial^{p^{i}}+g(x)\partial$ be an element of $\fL_p$, where $\beta_{i} \in k$ and $g(x)\in \fm$. Then $\ccD$ is conjugate under $G$ to
\begin{align*}
\partial^{p^{n-1}}+\sum_{i=0}^{n-2}\beta_{i}\partial^{p^{i}}+x^{(p^n-p^{n-1})}h(x)\partial
\end{align*}
for some $h(x)=\sum_{i=0}^{p^{n-1}-1}\mu_{i}x^{(i)}$ with $\mu_{i}\in k$.
\end{lem}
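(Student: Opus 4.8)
The plan is to kill the low-degree part of $g(x)\partial$ one graded piece at a time, conjugating by a sequence of admissible automorphisms and working from the bottom of the grading upward. Recall from \S\ref{2.1} the $\Z$-grading $\fL=\bigoplus_i kd_i$, $d_i=x^{(i+1)}\partial$, which I extend to $\fL_p$ by placing $\partial^{p^s}$ in degree $-p^s$; this is a Lie algebra grading since $[\partial^{p^s},x^{(m+1)}\partial]=\partial^{p^s}(x^{(m+1)})\partial=d_{m-p^s}$. In it, $\ccD$ has its unique lowest piece $\partial^{p^{n-1}}$ in degree $-p^{n-1}$, the coefficients $\beta_i$ sit in the negative degrees $-p^i$, and $g(x)\partial$ occupies degrees $\ge 0$. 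The target $x^{(p^n-p^{n-1})}h(x)\partial$ with $\deg h\le p^{n-1}-1$ is exactly an element of $\fL$ supported in degrees $p^n-p^{n-1}-1,\dots,p^n-2$: since $p^n-p^{n-1}=(p-1)p^{n-1}$ and $0\le i\le p^{n-1}-1$ produce no carries in base $p$, Lucas' theorem gives $\binom{p^n-p^{n-1}+i}{i}=1$, so $h\mapsto x^{(p^n-p^{n-1})}h$ is a bijection onto that span. Thus it suffices to conjugate $\ccD$ into an element whose $\fL$-part is supported in degrees $\ge p^n-p^{n-1}-1$, i.e. to annihilate its homogeneous components of degrees $0,1,\dots,p^n-p^{n-1}-2$.

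The key computation is that for $j\ge p^{n-1}$,
\[[d_j,\partial^{p^{n-1}}]=-[\partial^{p^{n-1}},x^{(j+1)}\partial]=-\partial^{p^{n-1}}(x^{(j+1)})\partial=-x^{(j+1-p^{n-1})}\partial=-d_{\,j-p^{n-1}},\]
using that $\ad\partial^{p^{n-1}}$ acts on $\fL$ by $f\partial\mapsto\partial^{p^{n-1}}(f)\partial$. Hence to act on degree $d':=j-p^{n-1}$ I take $j=d'+p^{n-1}$, which ranges over $p^{n-1},\dots,p^n-2$ as $d'$ ranges over $0,\dots,p^n-p^{n-1}-2$; since every such $j$ exceeds all $p^t-1$ with $1\le t\le n-1$, \thref{LieG} gives $d_j\in\Lie(G)$, and the admissible automorphism $\psi_j(t)\colon x\mapsto x+tx^{(j+1)}$ lies in $G$.

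The heart of the bookkeeping is a homogeneity argument. Assign $x^{(a)}$ degree $a$ in $\cO(1;n)$, so that $d_i$ and $\partial^{p^s}$ carry the element-degrees above, and give the parameter $t$ the weight $-j$; then $y=x+tx^{(j+1)}$ is homogeneous of degree $1$, so $\psi_j(t)$ and conjugation by it are graded for the total (element $+\,t$) degree. Tracking which powers of $t$ can land in element-degree $d'$: the piece $\partial^{p^{n-1}}$ (total degree $-p^{n-1}$) contributes to element-degree $d'=j-p^{n-1}$ only through $t^1$, giving exactly $-t\,d_{d'}$ by the bracket above; the pieces $\beta_i\partial^{p^i}$ and $\beta_0\partial$ contribute nothing to degree $d'$ because $j$ exceeds $d'+p^i$ and $d'+1$; and, once degrees $<d'$ have been cleared, the surviving part of $g(x)\partial$ contributes only its own degree-$d'$ coefficient $\gamma_{d'+1}d_{d'}$, independent of $t$. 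Therefore the degree-$d'$ component of $\psi_j(t)(\ccD)$ is the affine expression $(\gamma_{d'+1}-t)\,d_{d'}$, and I set $t=\gamma_{d'+1}$. The same homogeneity shows this conjugation preserves the coefficient $1$ of $\partial^{p^{n-1}}$ and all $\beta_i$ (their degrees $-p^s$ receive no new contributions) and introduces nothing in element-degrees $<d'$, so cleared degrees stay zero.

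Running this for $d'=0,1,\dots,p^n-p^{n-1}-2$ in turn and composing the resulting elements of $G$ conjugates $\ccD$ into the asserted normal form. I expect the main obstacle to be exactly this degree bookkeeping: making precise that conjugation by $\psi_j(t)$ acts strictly upward in the grading and affinely in $t$ on the targeted component while fixing the $\partial^{p^s}$-coefficients. The homogeneity trick of weighting $t$ by $-j$ is what makes each step exact rather than merely infinitesimal, and it is the device I would lean on to avoid any convergence issue with exponentiating $\ad d_j$ in characteristic $p$.
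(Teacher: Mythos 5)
Your proof is correct and is essentially the paper's own argument: both clear the function part of $\ccD$ one graded component at a time, using exactly the same admissible automorphisms $x\mapsto x+t\,x^{(p^{n-1}+i)}$, and both terminate at degree $p^n-p^{n-1}$ for the same reason (the next step would require the trivial automorphism $x\mapsto x+t\,x^{(p^n)}$, since $x^{(p^n)}=0$). The only difference is bookkeeping: where the paper verifies each step by explicit computation with Tyurin's formula modulo the filtration $\fL_{(i)}$, you package the same verification into the $t$-weighting/homogeneity argument combined with the fact that the $t$-linear term of the orbit map is $[d_j,\,\cdot\,]$ --- a tidier but equivalent check.
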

\begin{proof}
Take $\ccD=\partial^{p^{n-1}}+\sum_{i=0}^{n-2}\beta_{i}\partial^{p^{i}}+g(x)\partial$ as in the lemma. By  the proof of \cite[Theorem 1]{T98}, if $\Phi(x)=y$ is any admissible automorphism of $\cO(1;n)$ with identical linear part, then 
\begin{align*}
\Phi(\ccD)&=\partial^{p^{n-1}}+\sum_{i=1}^{n-2}\beta_{i}\partial^{p^{i}}+(y')^{-1}\big(\beta_{0}+\Phi(g(x))-\sum_{i=1}^{n-2}\beta_{i}\partial^{p^{i}}y-\partial^{p^{n-1}}y\big)\partial.\\
\intertext{If $g(x)\partial\equiv \gamma_{1}x\partial\,(\modd \fL_{(1)})$ for some $\gamma_1\in k$ and $\Phi(x)=y=x+\gamma_{1}x^{(p^{n-1}+1)}$, then we can show that}
\Phi(\ccD) &\equiv \partial^{p^{n-1}}+\sum_{i=1}^{n-2}\beta_{i}\partial^{p^{i}}+\beta_{0}\del\,(\modd \fL_{(1)}). 
\end{align*}
For $\gamma_1=0$, i.e. $\Phi$ is the identity automorphism, the result is clear. For $\gamma_1\in k^{*}$, let us show this congruence by proving that
\begin{align}\label{e:q1}
(y')^{-1}\big(\beta_{0}+\Phi(g(x))-\sum_{i=1}^{n-2}\beta_{i}\partial^{p^{i}}y-\partial^{p^{n-1}}y\big)\del- \beta_{0}\del \in \fL_{(1)}.
\end{align}
Note that $y'=1+\gamma_1x^{(p^{n-1} )}$ which is invertible in $\cO(1; n)$. Since $\fL_{(1)}$ is invariant under multiplication of invertible elements of $\cO(1;n)$, we can multiply both sides of \eqref{e:q1} by $y'$ and show that
\begin{align*}
\big(\beta_{0}+\Phi(g(x))-\sum_{i=1}^{n-2}\beta_{i}\partial^{p^{i}}y-\partial^{p^{n-1}}y\big)\del- \beta_{0} y'\del\in\fL_{(1)}.
\end{align*}
Since $g(x)\partial\equiv \gamma_{1}x\partial\,(\modd \fL_{(1)})$ and $\Phi$ preserves the natural filtration of $\fL$, in particular, it preserves $\fL_{(1)}$, hence 
\begin{align*}
&\big(\beta_{0}+\Phi(g(x))-\sum_{i=1}^{n-2}\beta_{i}\partial^{p^{i}}y-\partial^{p^{n-1}}y\big)\del- \beta_{0} y'\del\\
\equiv&\big(\beta_{0}+\gamma_{1}(x+\gamma_1 x^{(p^{n-1}+1)})-\sum_{i=1}^{n-2}\beta_{i}\gamma_{1}x^{(p^{n-1}-p^{i}+1)}-\gamma_1 x\big)\del-\beta_0(1+\gamma_1x^{(p^{n-1} )})\del\\
\equiv & 0\,(\modd \fL_{(1)}).
\end{align*}
Therefore, $\ccD$ is conjugate to $\del^{p^{n-1}}+\sum_{i=1}^{n-2}\beta_{i}\partial^{p^{i}}+\beta_{0}\partial\, (\modd \fL_{(1)})$. If $\ccD \equiv \partial^{p^{n-1}}+\sum_{i=1}^{n-2}\beta_{i}\partial^{p^{i}}+\beta_{0}\partial+\gamma_2x^{(2)}\del\,(\modd \fL_{(2)})$ for some $\gamma_2 \in k$, i.e. $\ccD =\partial^{p^{n-1}}+\sum_{i=1}^{n-2}\beta_{i}\partial^{p^{i}}+\beta_{0}\partial+g_2(x)\del$ with $g_2(x)\del\equiv\gamma_2x^{(2)}\del\,(\modd\fL_{(2)})$, then repeat the above process by applying the automorphism $\Phi_2(x)=y_2=x+\gamma_2 x^{(p^{n-1}+2)}$ to $\ccD$ we have that
\begin{align*}
\Phi_2(\ccD)&=\partial^{p^{n-1}}+\sum_{i=1}^{n-2}\beta_{i}\partial^{p^{i}}+(y_{2}')^{-1}\big(\beta_{0}+\Phi_2(g_2(x))-\sum_{i=1}^{n-2}\beta_{i}\partial^{p^{i}}y_2-\partial^{p^{n-1}}y_2\big)\partial.
\intertext{Then we can show that}
\Phi_2(\ccD)&\equiv\partial^{p^{n-1}}+\sum_{i=1}^{n-2}\beta_{i}\partial^{p^{i}}+\beta_0\del\,(\modd\fL_{(2)}).
\end{align*}
For $\gamma_2=0$, the result is clear. For $\gamma_2 \in k^{*}$, let us prove that
\begin{align}
&(y_2')^{-1}\big(\beta_{0}+\Phi_2(g_2(x))-\sum_{i=1}^{n-2}\beta_{i}\partial^{p^{i}}y_2-\partial^{p^{n-1}}y_2\big)\partial-\beta_{0}\del\in \fL_{(2)}.\label{e:q3}
\end{align}
By the same reasons as before, we can multiply both sides of \eqref{e:q3} by $y_2'$ and show that 
\begin{align*}
(\beta_{0}+\Phi_2(g_2(x))-\sum_{i=1}^{n-2}\beta_{i}\partial^{p^{i}}y_2-\partial^{p^{n-1}}y_2)\del - \beta_{0}y_2'\del \in\fL_{(2)}.
\end{align*}
Indeed, since $\fL_{(2)}$ is invariant under $\Phi_2$ we have that
\begin{align*}
&(\beta_{0}+\Phi_2(g_2(x))-\sum_{i=1}^{n-2}\beta_{i}\partial^{p^{i}}y_2-\partial^{p^{n-1}}y_2)\del - \beta_{0}y_2'\del\\
\equiv &\big(\beta_{0}+\gamma_{2}(x+\gamma_{2}x^{(p^{n-1}+2)})^{(2)}-\sum_{i=1}^{n-2}\beta_{i}\gamma_{2}x^{(p^{n-1}-p^{i}+2)}-\gamma_{2}x^{(2)}\big)\del-\beta_{0} (1+\gamma_{2}x^{(p^{n-1}+1)})\del\\
\equiv&0 \,(\modd\fL_{(2)}).
\end{align*}
Hence $\ccD$ is conjugate to $\partial^{p^{n-1}}+\sum_{i=1}^{n-2}\beta_{i}\partial^{p^{i}}+\beta_{0}\partial\, (\modd \fL_{(2)})$. Then 
\begin{align*}
\ccD\equiv\partial^{p^{n-1}}+\sum_{i=1}^{n-2}\beta_{i}\partial^{p^{i}}+\beta_{0}\del+\gamma_{3}x^{(3)}\partial\,(\modd \fL_{(3)})
\end{align*}
for some $\gamma_3\in k$. Apply the automorphism $\Phi_3(x)=y_3=x+\gamma_{3}x^{(p^{n-1}+3)}$ to $\ccD$ we can show that $\ccD$ is conjugate to $\partial^{p^{n-1}}+\sum_{i=1}^{n-2}\beta_{i}\partial^{p^{i}}+\beta_{0}\partial\,(\modd \fL_{(3)})$. Continue doing this until we get $\ccD$ is conjugate to $\partial^{p^{n-1}}+\sum_{i=1}^{n-2}\beta_{i}\partial^{p^{i}}+\beta_{0}\partial\, (\modd \fL_{(p^{n}-p^{n-1}-1)})$. Then
\begin{align*}
\ccD\equiv\partial^{p^{n-1}}+\sum_{i=1}^{n-2}\beta_{i}\partial^{p^{i}}+\beta_{0}\partial+\gamma_{p^{n}-p^{n-1}}x^{(p^{n}-p^{n-1})}\del\, (\modd \fL_{(p^{n}-p^{n-1})})
\end{align*}
for some $\gamma_{p^{n}-p^{n-1}} \in k$. Next, we were supposed to apply the automorphism $\Phi_{p^{n}-p^{n-1}}(x)=x+\gamma_{p^{n}-p^{n-1}}x^{(p^n)}$ to $\ccD$. But since $x^{(j)}=0$  for $j\geq p^n$ in $\cO(1; n)$, the automorphism $\Phi_{p^{n}-p^{n-1}}$ is the identity automorphism and we stop here. Therefore, $\ccD$ is conjugate under $G$ to 
\begin{align*}
\partial^{p^{n-1}}+\sum_{i=0}^{n-2}\beta_{i}\partial^{p^{i}}+x^{(p^{n}-p^{n-1})}h(x)\partial
\end{align*}
for some $h(x)=\sum_{i=0}^{p^{n-1}-1}\mu_{i}x^{(i)}$ with $\mu_{i}\in k$. This completes the proof.
\end{proof}

\begin{cor}\thlabel{c:c1}
Let $\ccD=\partial^{p^{m}}+\sum_{i=0}^{m-1}\beta_{i}\partial^{p^{i}}+g(x)\partial$ be an element of $\fL_p$, where $1\leq m\leq n-2, \beta_{i}\in k$ and $g(x)\in\fm$. Then $\ccD$ is conjugate under $G$ to 
\begin{align*}
\partial^{p^{m}}+\sum_{i=0}^{m-1}\beta_{i}\partial^{p^{i}}+x^{(p^n-p^{m})}h(x)\partial
\end{align*}
for some $h(x)=\sum_{i=0}^{p^{m}-1}\mu_{i}x^{(i)}$ with $\mu_{i}\in k$.
\end{cor}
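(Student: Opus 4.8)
The plan is to follow the proof of \thref{l:l13} almost verbatim, with the top power $\partial^{p^{n-1}}$ replaced by $\partial^{p^m}$. For an admissible $\Phi$ with $\Phi(x)=y$ of identical linear part, the transformation formula from the proof of \cite[Theorem~1]{T98} reads
\[
\Phi(\ccD)=\partial^{p^m}+\sum_{i=1}^{m-1}\beta_i\partial^{p^i}+(y')^{-1}\Big(\beta_0+\Phi(g(x))-\sum_{i=1}^{m-1}\beta_i\partial^{p^i}y-\partial^{p^m}y\Big)\partial,
\]
so that $\Phi$ fixes the $p$-power part and collects all corrections into the coefficient of $\partial$. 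I would then clear this coefficient one filtration layer at a time: to remove a term $\gamma_k x^{(k)}\partial$ I would use $\Phi(x)=x+\gamma_k x^{(p^m+k)}$, whose contribution $-\partial^{p^m}y=-\gamma_k x^{(k)}$ lands precisely in degree $k$, while the remaining corrections $-\beta_i\partial^{p^i}y$ (with $p^i<p^m$) sit in strictly higher degrees and are postponed to later steps. Running $k$ upward, the procedure terminates at $k=p^n-p^m$, where the exponent reaches $p^n$ and $x^{(p^n)}=0$, leaving a residue in degrees $\geq p^n-p^m$, that is, of the form $x^{(p^n-p^m)}h(x)\partial$ with $\deg h<p^m$.

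The new difficulty, and the main obstacle, is that these clearing automorphisms need not be admissible. Since every admissible $y$ has vanishing coefficient at $x^{(p^i)}$ (equivalently $d_{p^i-1}\notin\Lie(G)$ by \thref{LieG}, equivalently $x^{(p^i)}$ is primitive in $\cO(1;n)$), the choice $x+\gamma_k x^{(p^m+k)}$ is illegal exactly at the exceptional degrees $k=p^i-p^m$ with $m<i\leq n-1$, where $p^m+k=p^i$. This never occurs in \thref{l:l13}: for $m=n-1$ no $p$-power lies strictly between $p^{n-1}$ and $p^n$, so the clean triangular induction goes through unimpeded. For $m\leq n-2$ there are $n-1-m$ such degrees that the first-order mechanism cannot reach.

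To clear an exceptional degree $k=p^i-p^m$ I would exploit the fact that the exact correction $-(y')^{-1}(\partial^{p^m}y)\partial$ still meets degree $k$ through the $(y')^{-1}$-twist, at second order in the perturbation. Concretely, taking $y=x+\gamma_1x^{(p^m+1)}+\gamma_2x^{(p^i-p^m)}$, the cross term of $(y')^{-1}$ against $\partial^{p^m}y$ contributes a term $\gamma_1\gamma_2\binom{p^i-p^m}{p^m}x^{(k)}$, and $\binom{p^i-p^m}{p^m}\not\equiv0\pmod p$ by Lucas' theorem, so $\gamma_1,\gamma_2$ can be chosen to annihilate $\gamma_k$. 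Each such step reintroduces terms in the lower degrees $1$ and $p^i-2p^m$, so near the exceptional degrees the clean triangular induction must be replaced by a coupled system, which is then re-cleared by the first-order automorphisms above. Verifying that this enlarged system remains solvable, so that all of $\gamma_1,\dots,\gamma_{p^n-p^m-1}$ are still annihilated and no residue survives below degree $p^n-p^m$, is the technical heart of the argument; once it is settled the residue has the asserted form $x^{(p^n-p^m)}h(x)\partial$ with $h(x)=\sum_{i=0}^{p^m-1}\mu_i x^{(i)}$.
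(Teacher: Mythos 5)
You have put your finger on exactly the right difficulty: for $m\leq n-2$ the clearing automorphisms $\Phi(x)=x+\gamma_k x^{(p^m+k)}$ are inadmissible precisely at the degrees $k=p^t-p^m$ with $m<t\leq n-1$, and this is where the scheme of \thref{l:l13} stops applying. (For what it is worth, the paper's own proof of this corollary is the naive triangular induction: it applies $\Phi(x)=x+\gamma_i x^{(p^m+i)}$ at \emph{every} degree $i$ without comment, so it is open to the same objection you raise; on this point your proposal is more careful than the paper.) However, your repair cannot be made to work. Locally: to produce the cross term $\gamma_1\gamma_2\binom{p^t-p^m}{p^m}x^{(p^t-p^m)}$ you must take $\gamma_1\neq 0$, but then the first-order correction $-\gamma_1x$ lands in degree $1$ of the coefficient of $\partial$, strictly \emph{below} the degree being cleared; to first order, degree $1$ can only be re-cleared by moving the coefficient of $x^{(p^m+1)}$, i.e.\ by replacing $\gamma_1$ with $\gamma_1+\delta$ and then forcing $\gamma_1+\delta=0$, which kills the cross term. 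The coupled system you postpone is inconsistent, not merely technical.

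In fact no choice of automorphism can clear an exceptional degree, because there is a $G$-invariant obstruction. Take $\beta_i=0$ for all $i$ and $g(x)=\gamma x^{(p^t-p^m)}$ with $\gamma\neq 0$ and $m<t\leq n-1$. Jacobson's formula, with the same degree bookkeeping as in the proof of \thref{p:p1}, gives
\[
\ccD^{p}=\partial^{p^{m+1}}+\gamma x^{(p^t-p^{m+1})}\partial+E_1,\qquad E_1\in\fL_{(1)},
\]
and iterating, $\ccD^{p^{t-m}}=\partial^{p^t}+\gamma\partial+E$ with $E\in\fL_{(0)}$. Under further $p$-th powers the summand $\gamma\partial$ is pumped into the $p$-power part: since $\fL_{(0)}$ is a restricted subalgebra and every Jacobson term lies in the ideal $\fL$, the element $\ccD^{p^{n-m}}$ has $\partial^{p^{n-t}}$-component equal to $\gamma^{p^{n-t}}\neq 0$, hence $\ccD^{p^{n-m}}\notin\fL$. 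By contrast, for any $A=\partial^{p^m}+x^{(p^n-p^m)}h(x)\partial$ the tail remains inside the restricted subalgebra $\fL_{(1)}$ during the first $n-m-1$ successive $p$-th powers, so $A^{p^{n-m}}\in\fL$. Since $\fL=[\fL_p,\fL_p]$ is stable under every automorphism of $\fL_p$, and automorphisms commute with the $[p]$-map, $\ccD$ cannot be conjugate under $G$ to any element of the asserted normal form. So the exceptional degrees are genuine obstructions rather than a technical nuisance: neither your second-order mechanism nor any other choice of $y$ can remove a lowest-degree term sitting at $p^t-p^m$ when all $\beta_i$ vanish, and a correct argument must treat such elements separately instead of conjugating them into the normal form.
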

\begin{proof}
If $g(x)\del\equiv \gamma_{i}x^{(i)}\del \,(\modd \fL_{(i)})$ for some $\gamma_i \in k$, then the automorphism $\Phi(x)=x+\gamma_{i}x^{(p^{m}+i)}$ reduces $\ccD$ to the form $\ccD\equiv \del^{p^{m}}+\sum_{i=1}^{m-1}\beta_{i}\del^{p^{i}}+\beta_{0}\del\, (\modd \fL_{(i)})$. Continue doing this until we get $\ccD\equiv \del^{p^{m}}+\sum_{i=1}^{m-1}\beta_{i}\del^{p^{i}}+\beta_{0}\del\, (\modd \fL_{(p^{n}-p^m-1)})$. This completes the proof.
\end{proof}

\begin{lem}\thlabel{p:p1}
Let $\ccD=\partial^{p^{n-1}}+\sum_{i=0}^{n-2}\beta_{i}\partial^{p^{i}}+x^{(p^{n}-p^{n-1})}\sum_{i=0}^{p^{n-1}-1}\mu_{i}x^{(i)}\partial$ be a nilpotent element of $\fL_p$.
\begin{enumerate}[\upshape(i)]
\item If $\beta_{i}=0$ for all $i$, then $\mu_{0}=\mu_{1}=0$ and $\ccD^{p^{n-1}}\in \fL_{(1)}$.
\item 
\begin{enumerate}[\upshape(a)]
\item If $j\geq 0$ is the smallest index such that $\beta_{j}\neq 0$, then $\mu_{0}=0$ and $\ccD^{p^{n-1-j}}$ is conjugate under $G$ to 
\[
\del^{p^{n-1}}+x^{(p^{n}-p^{n-1})}\sum_{i=2}^{p^{n-1}-1}\nu_{i}x^{(i)}\del
\]
for some $\nu_{i}\in k$. Hence $\ccD^{p^{n-1}}\in \fL_{(1)}$ for all $j\geq 1$.
\item In particular, if $\beta_{0}\neq 0$, then $\ccD^{p^{n-1}}$ is conjugate under $G$ to $\del^{p^{n-1}}$. Hence $\ccD=\del^{p^{n-1}}+\sum_{i=0}^{n-2}\gamma_{i}\del^{p^{i}}$ for some $\gamma_{i}\in k$ with $\gamma_{0}\neq 0$.
\end{enumerate}
\end{enumerate}
\end{lem}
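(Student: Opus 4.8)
\emph{Approach.} The plan is to realise $\fL_p=\Der(\fL)=\Der(\cO(1;n))$ as operators on $\cO(1;n)$, so that the $p$-map is ordinary composition of derivations, and to exploit that $\fL=W(1;n)$ is an ideal of $\fL_p$. Write $\ccD=A+B$ with $A=\partial^{p^{n-1}}+\sum_{i=0}^{n-2}\beta_i\partial^{p^i}$ the additive (Frobenius) polynomial in $\partial$ and $B=g(x)\partial\in\fL$, where $g=x^{(p^n-p^{n-1})}\sum_i\mu_i x^{(i)}\in\fm$. First I would record the elementary power identities. Since the $\partial^{p^i}$ commute, $A^{p^s}=\sum_i\beta_i^{p^s}\partial^{p^{i+s}}$, with every $\partial^{p^m}=0$ for $m\ge n$; and $B^p=(g\partial)^p=g^p\partial^p+\big((g\partial)^{p-1}g\big)\partial=0$, because $g\in\fm$ forces $g^p=0$ while $B(g)=gg'$ already has degree $\ge p^n$, so $(g\partial)^{p-1}g=0$. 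Combined with Jacobson's formula and the fact that every correction bracket lands in the ideal $\fL$, this gives the one-step identity $\ccD^p=A^p+(\text{element of }\fL)$. The constant term of that $\fL$-summand is controlled solely by the single-$B$ part of the Jacobson expansion, namely $s_{p-1}(A,B)=(\ad A)^{p-1}(B)=\big(A^{p-1}g\big)\partial$; a degree count (each $B$ raises operator degree by $\ge p^n-p^{n-1}-1$, each $\ad A$ lowers it by at most $p^{n-1}$, and only the $(\partial^{p^{n-1}})^{p-1}$ monomial of $A^{p-1}$ can carry $x^{(p^n-p^{n-1})}$ down to $x^{(0)}$) shows that the coefficient of $d_{-1}=\partial$ in $\ccD^p$ equals exactly $\mu_0$.

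\emph{Main obstacle.} The heart of the proof, and the step I expect to be hardest, is to propagate this one-step information through iterated $p$-th powers. The clean identity $\ccD^{p^s}\equiv A^{p^s}$ modulo $\fL$ holds only for $s=1$: as soon as the $\fL$-part has a nonzero constant term $G(0)$, its next power reintroduces a $\partial^p$ contribution, since $(G\partial)^p=G(0)^p\partial^p+(\text{element of }\fL)$. Tracking this feedback, I would prove by induction that if $\mu_0\neq0$ then the coefficient of $\partial^{p^{s-1}}$ in the constant-coefficient part of $\ccD^{p^s}$ is $\mu_0^{p^{s-1}}$, and that this is the unique top $\partial$-power reachable at step $s$ (the contribution of $A^{p^s}$ sits strictly higher and cannot cancel it). Specialising to $s=n$ makes the coefficient of $\partial^{p^{n-1}}$ in $\ccD^{p^n}$ equal to $\mu_0^{p^{n-1}}$; since $\ccD$ is nilpotent we have $\ccD^{p^n}=0$, whence $\mu_0=0$. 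This argument is uniform over all cases (it uses nothing about which $\beta_i$ vanish), it is exactly where nilpotency is consumed, and it is where the Jacobson/filtration bookkeeping must be carried out with care—including the verification that $s_{p-1}(A,B)=(\ad A)^{p-1}(B)$ has coefficient $1$, so that the relevant universal constant is nonzero.

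\emph{Case (i) and the filtration step.} Once $\mu_0=0$ the feedback disappears and $\ccD^{p^s}=A^{p^s}+(\text{element of }\fL)$ for every $s$. When all $\beta_i=0$ this gives $\ccD^{p^s}\in\fL$ for $s\ge1$, and the same single-$B$ computation, now with vanishing constant term, shows $\ccD^p=\mu_1 d_0+(\text{element of }\fL_{(1)})$, so $\ccD^p\in\fL_{(0)}$. Because $\fL_{(0)}$ is a restricted subalgebra with $\fL_{(1)}=\mathrm{nil}(\fL_{(0)})$ and toral quotient $\fL_{(0)}/\fL_{(1)}=kd_0$, any nilpotent element of $\fL_{(0)}$ lies in $\fL_{(1)}$; applied to the nilpotent $\ccD^p$ this forces $\mu_1=0$. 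Finally a direct check that $\fL_{(1)}$ is closed under the $p$-map (for $f\in\fm$ with $\deg f\ge2$ one has $(f\partial)^p=\big((f\partial)^{p-1}f\big)\partial\in\fL_{(1)}$) yields $\ccD^{p^{n-1}}=(\ccD^{p})^{p^{n-2}}\in\fL_{(1)}$, completing (i).

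\emph{Case (ii).} Let $j$ be minimal with $\beta_j\neq0$; the uniform argument already gives $\mu_0=0$, so $\ccD^{p^{n-1-j}}=A^{p^{n-1-j}}+(\text{element of }\fL)$, and minimality of $j$ collapses $A^{p^{n-1-j}}$ to the single term $\beta_j^{p^{n-1-j}}\partial^{p^{n-1}}$ (every higher $\beta_i$ is shifted past $\partial^{p^n}=0$). Conjugating by the admissible scaling automorphism $x\mapsto tx$, which multiplies $\partial^{p^i}$ by $t^{-p^i}$, normalises the leading coefficient to $1$, after which \thref{l:l13} puts the $\fL$-part into the shape $x^{(p^n-p^{n-1})}\sum_i\nu_i x^{(i)}\partial$. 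The resulting nilpotent element $\ccD':=\ccD^{p^{n-1-j}}$ is of the form treated in (i), so (i) applied to $\ccD'$ gives $\nu_0=\nu_1=0$, which is the asserted normal form; and the $p$-closure of $\fL_{(1)}$ gives $\ccD^{p^{n-1}}=(\ccD')^{p^{j}}\in\fL_{(1)}$ when $j\ge1$. For (ii)(b), $\beta_0\neq0$ means $j=0$, so $(\ccD')^p$ is conjugate to $\ccD^{p^n}=0$; expanding $(\partial^{p^{n-1}}+x^{(p^n-p^{n-1})}\sum_{i\ge2}\nu_i x^{(i)}\partial)^p$ through the deepest bracket $(\ad\partial^{p^{n-1}})^{p-1}$ shows its lowest term is a nonzero multiple of $\nu_2 d_1$, then of $\nu_3 d_2$, and so on, so $(\ccD')^p=0$ forces every $\nu_i=0$ and $\ccD^{p^{n-1}}$ is conjugate to $\partial^{p^{n-1}}$. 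A concluding normalisation—conjugating $\ccD$ into the centraliser of $\partial^{p^{n-1}}$ and removing the residual $\fL$-part—then identifies $\ccD$, up to $G$-conjugacy, with $\partial^{p^{n-1}}+\sum_{i=0}^{n-2}\gamma_i\partial^{p^i}$; here $\gamma_0\neq0$ because the coefficient of $\partial$ is the lowest-filtration datum and is preserved (up to a nonzero scalar) by the filtration-respecting action of $G$, placing $\ccD$ in $\cN_{\mathrm{reg}}$.
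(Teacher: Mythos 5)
Your core bookkeeping is correct, and it is genuinely different from the paper's. Writing $\fL_p=\fL\oplus\bigoplus_{i=1}^{n-1}k\del^{p^i}$, letting $a_i(y)$ denote the $\del^{p^i}$-coefficient of $y\in\fL_p$ and $b(y)$ the coefficient of $\del$ in its $\fL$-component, Jacobson's formula together with the facts that $\fL$ is an ideal, that $\fL_{(0)}$ is restricted, and that commuting $p$-polynomials in $\del$ add under $p$-th powers yields the exact recursion $a_1(y^p)=b(y)^p$ and $a_i(y^p)=a_{i-1}(y)^p$ for $2\leq i\leq n-1$. Your one-step computation $b(\ccD^p)=\mu_0$ is right, and iterating the recursion gives $a_{n-1}(\ccD^{p^n})=\mu_0^{p^{n-1}}$, so $\ccD^{p^n}=0$ forces $\mu_0=0$ uniformly in all cases. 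The paper, by contrast, reaches $\mu_0=0$ only at the very end of an induction in which the element is re-normalized by a conjugation (\thref{l:l13}, \thref{c:c1}) after every single $p$-th power. Your treatment of part (i) is also correct.

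The gap is in case (ii), at the sentence ``once $\mu_0=0$ the feedback disappears and $\ccD^{p^s}=A^{p^s}+(\text{element of }\fL)$ for every $s$'', which you use to collapse $\ccD^{p^{n-1-j}}$ to $\beta_j^{p^{n-1-j}}\del^{p^{n-1}}$ modulo $\fL$. This inference is not valid: $\mu_0=0$ says only that $b_1:=b(\ccD^p)=0$. The constant term $b_2$ of the $\fL$-part of $\ccD^{p^2}$ is produced by the Jacobson brackets $s_j(A^p,F_1)$, and $F_1$ is known only to lie in $\fL_{(0)}$, not in a high filtration window; the sub-leading monomials of $A^{p-1}$ applied to $g$ leave terms of $F_1$ in intermediate degrees, and $(\ad A^p)^{p-1}$ can lower suitable such terms all the way to constants. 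Hence $b_2$ need not vanish, so $a_1(\ccD^{p^3})=b_2^p$ can be nonzero, and $\ccD^{p^{n-1-j}}$ may carry $\del^{p^i}$-terms with $1\leq i\leq n-2$ which neither your scaling (it acts diagonally on $\fL_p/\fL$) nor your single terminal use of \thref{l:l13} is shown to remove; part (i) then no longer applies to the result. Preventing exactly this feedback is the purpose of the paper's interleaved conjugations, which keep the $\fL$-part inside $x^{(p^n-p^{j+r})}\cO(1;n)\del$ at every step so that all bracket terms stay in $\fL_{(1)}$. Fortunately, your own recursion closes the gap once you apply it to all coefficients rather than just the bottom one: $a_i(\ccD^{p^n})=b_{n-i}^{p^i}$ for $1\leq i\leq n-1$, so nilpotency forces $b_1=\dots=b_{n-1}=0$, and only then does $\ccD^{p^s}\equiv A^{p^s}$ modulo $\fL$ hold for all $s\leq n-1$; with this amendment your route through \thref{l:l13} and part (i) does establish (ii)(a). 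Finally, the closing claim of (ii)(b) --- that $\ccD$ itself has no $\fL$-component beyond $\gamma_0\del$, i.e. all $\mu_i=0$ --- is only gestured at (``removing the residual $\fL$-part''); this is a genuine step, which the paper settles by identifying $\fz_{\fL_p}(\del^{p^{n-1}})=k\del^{p^{n-1}}+W(1;n-1)_p$ and running a case analysis inside that centraliser reusing (i) and (ii)(a), and your proposal would need to supply an equivalent argument.
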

\begin{proof}
Let $\ccD=\partial^{p^{n-1}}+\sum_{i=0}^{n-2}\beta_{i}\partial^{p^{i}}+x^{(p^{n}-p^{n-1})}\sum_{i=0}^{p^{n-1}-1}\mu_{i}x^{(i)}\partial$ be a nilpotent element of $\fL_p$. Then $\ccD^{p^{n}}=0$; see \S\ref{2.1}. Let us first calculate $\ccD^{p}$. Recall Jacobson's formula,
\begin{align}\label{J1}
(\ccD_1+\ccD_2)^{p}=\ccD_1^{p}+\ccD_2^{p}+\sum_{i=1}^{p-1}s_{i}(\ccD_1, \ccD_2)
\end{align}
for all $\ccD_1, \ccD_2\in \fL_p$, and $s_{i}(\ccD_1, \ccD_2)$ can be computed by the formula
\begin{align*}
\ad(t\ccD_1+\ccD_2)^{p-1}(\ccD_1)=\sum_{i=1}^{p-1}is_{i}(\ccD_1, \ccD_2)t^{i-1},
\end{align*}
where $t$ is a parameter. Set $\ccD_1=x^{(p^{n}-p^{n-1})}\sum_{i=0}^{p^{n-1}-1}\mu_{i}x^{(i)}\partial$ and $\ccD_2=\partial^{p^{n-1}}+\sum_{i=0}^{n-2}\beta_{i}\partial^{p^{i}}$. Then $\ccD_1^{p}=0$ by \eqref{eip} and $\ccD_2^{p}=\sum_{i=0}^{n-2}\beta_{i}^{p}\partial^{p^{i+1}}$. By the natural filtration of $\fL$, we have that for any $1\leq s\leq p-2$, 
\begin{align*}
[\ccD_1, (\ad \ccD_2)^{s}(\ccD_1)]&\in [\fL_{(p^{n}-p^{n-1}-1)}, \fL_{(p^{n}-(s+1)p^{n-1}-1)}]\\
&\subseteq[\fL_{(p^{n}-p^{n-1}-1)}, \fL_{(p^{n-1}-1)}]\\
&\subseteq \fL_{(p^{n}-2)}=\text{span$\{x^{(p^{n}-1)}\partial\}$}.
\end{align*}
This last term will appear if and only if $s=p-2$. So
\begin{align}\label{cal}
\ccD^{p}&=\sum_{i=0}^{n-2}\beta_{i}^{p}\partial^{p^{i+1}}+(\ad \ccD_2)^{p-1}(\ccD_1)+\mu_{(1)} x^{(p^{n}-1)}\partial
\end{align}
for some $\mu_{(1)}\in k$.

(i) If $\beta_{i}=0$ for all $i$, then $\ccD^{p}=(\ad \del^{p^{n-1}})^{p-1}(\ccD_1)+\mu_{(1)} x^{(p^{n}-1)}\partial$. Since $\partial^{p^{n-1}}$ is a derivation of $\fL$ and $\partial^{p^{n-1}}(\sum_{i=0}^{p^{n-1}-1}\mu_{i}x^{(i)})=0$, we have that
\begin{align*}
\ccD^{p}
&=\sum_{i=0}^{p^{n-1}-1}\mu_{i}x^{(i)}\del+\mu_{(1)} x^{(p^{n}-1)}\partial.
\intertext{If $\mu_{0}\neq 0$, then $\ccD^{p}\equiv \mu_{0}\partial \,(\modd \fL_{(0)})$. By Jacobson's formula,} 
\ccD^{p^{n}}&\equiv \mu_{0}^{p^{n-1}}\partial^{p^{n-1}}+\sum_{i=0}^{n-2} \mu_{i}'\del^{p^{i}}\,(\modd \fL_{(0)}) 
\end{align*}
for some $\mu_{i}'\in k$. As $\mu_0\neq 0$, this implies that $\ccD^{p^{n}}\not \equiv 0\,(\modd \fL_{(0)})$ and so is not equal to $0$. This is a contradiction. Hence $\mu_{0}=0$. Similarly, if $\mu_{1}\neq 0$ then $\ccD^{p}\equiv \mu_{1}x\partial \,(\modd \fL_{(1)})$. But $\ccD^{p^{n}}\equiv \mu_{1}^{p^{n-1}}x\partial\not \equiv 0\,(\modd \fL_{(1)})$, a contradiction. Thus $\mu_{1}=0$. Therefore, $\ccD^{p}=\sum_{i=2}^{p^{n-1}-1}\mu_{i}x^{(i)}\del+\mu_{(1)} x^{(p^{n}-1)}\partial$, which is an element of $\fL_{(1)}$. Since $\fL_{(1)}$ is restricted we have that $\ccD^{p^{n-1}}\in \fL_{(1)}$. This proves statement (i).

(ii)(a) Let $j\geq 0$ be the smallest index such that $\beta_{j}\neq 0$, and let $l$ be the largest index such that $\beta_{l}\neq 0$, i.e. $0\leq j\leq l\leq n-2$. Let us consider the special case $j=l$, i.e.
\begin{align*}
\ccD=\partial^{p^{n-1}}+\beta_{j}\partial^{p^{j}}+x^{(p^{n}-p^{n-1})}\sum_{i=0}^{p^{n-1}-1}\mu_{i}x^{(i)}\partial.
\end{align*}
We prove by induction that for any $1\leq r\leq n-1-j$, $\ccD^{p^{r}}$ is conjugate under $G$ to 
\begin{align*}
\partial^{p^{j+r}}+\beta_{0,(1)}^{p^{r-1}}\del^{p^{r-1}}+x^{(p^n-p^{j+r})}\sum_{i=0}^{p^{j+r}-1}\mu_{i,(r)}x^{(i)}\del
\end{align*}
for some $\beta_{0,(1)}\in k^{*}\mu_0$ and $\mu_{i,(r)} \in k$. For $r=1$, the previous calculation \eqref{cal} gives
\begin{align*}
\ccD^{p}=\beta_{j}^{p}\partial^{p^{j+1}}+\ad \big(\partial^{p^{n-1}}+\beta_{j}\partial^{p^{j}}\big)^{p-1}\big(x^{(p^{n}-p^{n-1})}\sum_{i=0}^{p^{n-1}-1}\mu_{i}x^{(i)}\partial\big)+\mu_{(1)} x^{(p^{n}-1)}\partial.
\end{align*}
Note that
\begin{align*}
&\ad \big(\partial^{p^{n-1}}+\beta_{j}\partial^{p^{j}}\big)^{p-1}\big(x^{(p^{n}-p^{n-1})}\sum_{i=0}^{p^{n-1}-1}\mu_{i}x^{(i)}\partial\big)\\
=&\ad\big(\sum_{m=0}^{p-1}(-1)^{m}\beta_{j}^{p-1-m}\partial^{mp^{n-1}+(p-1-m)p^{j}}\big)\big(\sum_{i=0}^{p^{n-1}-1}\mu_{i}x^{(p^{n}-p^{n-1}+i)}\partial\big)\\
=&\sum_{i=0}^{p^{n-1}-1}\mu_{i}x^{(i)}\partial-\beta_{j}\sum_{i=0}^{p^{n-1}-1}\mu_{i}x^{(p^{n-1}-p^{j}+i)}\partial+\dots+\beta_{j}^{p-1}\sum_{i=0}^{p^{n-1}-1}\mu_{i}x^{(p^{n}-p^{n-1}-(p-1)p^{j}+i)}\partial.
\end{align*}
The above result can be rewritten as $\mu_{0}\del+g(x)\del$ for some $g(x)\in \fm$. Hence
\[
\ccD^{p}=\beta_{j}^{p}\partial^{p^{j+1}}+\mu_{0}\del+g(x)\del+\mu_{(1)} x^{(p^{n}-1)}\partial.
\]
Then the automorphism $\Phi(x)=\alpha x$ with $\alpha^{p^{j+1}}=\beta_j^{p}$ reduces $\ccD^{p}$ to the form 
\[
\ccD^{p}=\partial^{p^{j+1}}+\beta_{0,(1)}\del+f_{1}(x)\del,
\]
where $\beta_{0,(1)}\in k^{*}\mu_{0}$ and $f_{1}(x)\in \fm$. It follows from \thref{l:l13} and \thref{c:c1} that $\ccD^{p}$ is conjugate under $G$ to 
\begin{align*}
\partial^{p^{j+1}}+\beta_{0,(1)}\del+x^{(p^n-p^{j+1})}\sum_{i=0}^{p^{j+1}-1}\mu_{i,(1)}x^{(i)}\del 
\end{align*}
for some $\mu_{i,(1)}\in k$. Thus the result is true for $r=1$. Suppose the result is true for $r=k-1$, i.e. $\ccD^{p^{k-1}}$ is conjugate under $G$ to 
\[
\partial^{p^{j+k-1}}+\beta_{0,(1)}^{p^{k-2}}\del^{p^{k-2}}+x^{(p^n-p^{j+k-1})}\sum_{i=0}^{p^{j+k-1}-1}\mu_{i,(k-1)}x^{(i)}\del
\] 
for some $\mu_{i,(k-1)}\in k$. Let us calculate $\ccD^{p^{k}}$. Set $\ccD_1=x^{(p^n-p^{j+k-1})}\sum_{i=0}^{p^{j+k-1}-1}\mu_{i,(k-1)}x^{(i)}\del$ and $\ccD_2=\partial^{p^{j+k-1}}+\beta_{0,(1)}^{p^{k-2}}\del^{p^{k-2}}$ in the Jacobson's formula~\eqref{J1}. Then $\ccD_1^p\in\fL_{(1)}$ and $\ccD_2^p=\partial^{p^{j+k}}+\beta_{0,(1)}^{p^{k-1}}\del^{p^{k-1}}$. By the natural filtration of $\fL$, we have that 
\begin{align*}
(\ad \ccD_2)^{p-1}(\ccD_1)\in \fL_{(p^n-p^{j+k}-1)}\subseteq\fL_{(1)}.
\end{align*}
Similarly, for any $1\leq s\leq p-2$, 
\begin{align*}
[\ccD_1, (\ad \ccD_2)^s(\ccD_1)]&\in [\fL_{(p^n-p^{j+k-1}-1)}, \fL_{(p^n-(s+1)p^{j+k-1}-1)}]\\
&\subseteq[\fL_{(1)}, \fL_{(p^n-(s+1)p^{j+k-1}-1)}]\\
&\subseteq\fL_{(p^n-(s+1)p^{j+k-1})},\\
&\subseteq \fL_{(p^n-(p-1)p^{j+k-1})},\\
&\subseteq \fL_{(p^n-(p-1)p^{n-2})}\,\text{(since $1\leq j+k-1\leq n-2$)}\\
&\subseteq \fL_{(1)}.
\end{align*}
Hence $\ccD^{p^{k}}=\partial^{p^{j+k}}+\beta_{0,(1)}^{p^{k-1}}\del^{p^{k-1}}+f_{k}(x)\del$ for some $f_{k}(x)\in \fm$. By \thref{l:l13} and \thref{c:c1}, $\ccD^{p^{k}}$ is conjugate under $G$ to 
\[
\partial^{p^{j+k}}+\beta_{0,(1)}^{p^{k-1}}\del^{p^{k-1}}+x^{(p^n-p^{j+k})}\sum_{i=0}^{p^{j+k}-1}\mu_{{i}, (k)}x^{(i)}\del
\]for some $\mu_{{i}, (k)}\in k$, i.e. the result is true for $r=k$. Therefore, we proved by induction that for any $1\leq r\leq n-1-j$, $\ccD^{p^{r}}$ is conjugate under $G$ to 
\begin{align*}
\partial^{p^{j+r}}+\beta_{0,(1)}^{p^{r-1}}\del^{p^{r-1}}+x^{(p^n-p^{j+r})}\sum_{i=0}^{p^{j+r}-1}\mu_{i,(r)}x^{(i)}\del
\end{align*}
for some $\beta_{0,(1)}\in k^{*}\mu_{0}$ and $\mu_{i,(r)} \in k$. In particular, $\ccD^{p^{n-1-j}}$ is conjugate under $G$ to 
\begin{align}
\partial^{p^{n-1}}+\beta_{0,(1)}^{p^{n-2-j}}\del^{p^{n-2-j}}+x^{(p^n-p^{n-1})}\sum_{i=0}^{p^{n-1}-1}\mu_{i,(n-1-j)}x^{(i)}\del\label{ee1}.
\end{align}
By Jacobson's formula, 
\begin{align}
\ccD^{p^{n-j}}=\beta_{0,(1)}^{p^{n-1-j}}\del^{p^{n-1-j}}+\sum_{i=0}^{p^{n-1}-1}\mu_{i,(n-1-j)}x^{(i)}\del+f_{n-j}(x) \del+\mu_{(n-j)}x^{(p^{n}-1)}\del\,\label{ee2}
\end{align} for some $f_{n-j}(x)\del \in \fL_{(1)}$ and $\mu_{(n-j)}\in k$. Then
\begin{align*}
\ccD^{p^{n}}\equiv \beta_{0,(1)}^{p^{n-1}}\del^{p^{n-1}}+\mu_{0,(n-1-j)}^{p^{j}}\del^{p^{j}}+\sum_{i=0}^{j-1}\mu_i'\del^{p^{i}}\,(\modd \fL_{(0)})
\end{align*}
for some $\mu_i' \in k$. But $\ccD^{p^{n}}=0$, this implies that $\beta_{0,(1)}=0$ and so $\mu_{0}=0$. We must also have that $\mu_{0,(n-1-j)}=0$ and $\mu_i'=0$ for all $i$. Substitute these into \eqref{ee2}, we get 
\begin{align*}
\ccD^{p^{n-j}}&=\sum_{i=1}^{p^{n-1}-1}\mu_{i,(n-1-j)}x^{(i)}\del +f_{n-j}(x) \del+\mu_{(n-j)}x^{(p^{n}-1)}\del\\
&\equiv \mu_{1,(n-1-j)}x\del \,(\modd \fL_{(1)}).
\end{align*}
Then one can show similarly that $\mu_{1,(n-1-j)}=0$. Hence $\ccD^{p^{n-1-j}}$ \eqref{ee1} is conjugate under $G$ to 
\begin{align*}
\partial^{p^{n-1}}+x^{(p^n-p^{n-1})}\sum_{i=2}^{p^{n-1}-1}\mu_{i,(n-1-j)}x^{(i)}\del.
\end{align*}
If $j<l$, i.e. $\ccD=\partial^{p^{n-1}}+\sum_{i=j}^{l}\beta_{i}\partial^{p^{i}}+x^{(p^{n}-p^{n-1})}\sum_{i=0}^{p^{n-1}-1}\mu_{i}x^{(i)}\partial$, then one can show similarly that $\ccD^{p^{n-1-j}}$ is conjugate under $G$ to 
\[
\partial^{p^{n-1}}+\lambda\del^{p^{n-2-j}}+\sum_{i=0}^{n-3-j}\lambda_i\del^{p^{i}}+x^{(p^n-p^{n-1})}\sum_{i=0}^{p^{n-1}-1}\nu_ix^{(i)}\del
\] for some $\lambda\in k^{*}\mu_0$ and $\lambda_i, \nu_i \in k$. Then by the same arguments as above, one can show that $\mu_{0}=0$ and $\ccD^{p^{n-1-j}}$ is conjugate under $G$ to 
\begin{align*}
&\partial^{p^{n-1}}+x^{(p^{n}-p^{n-1})}\sum_{i=2}^{p^{n-1}-1}\nu_ix^{(i)}\partial.
\end{align*} 
Suppose now $j\geq 1$. By Jacobson's formula,  
\begin{align*}
\ccD^{p^{n-j}}=&\sum_{i=2}^{p^{n-1}-1}\nu_{i}x^{(i)}\partial+\mu_{(n-j)}x^{(p^n-1)}\del
\end{align*}
for some $\mu_{(n-j)}\in k$. This is an element of $\fL_{(1)}$. Since $G$ preserves $\fL_{(1)}$ we have that $\ccD^{p^{n-j}}\in \fL_{(1)}$. As $\fL_{(1)}$ is restricted, we have that $\ccD^{p^{n-1}}\in \fL_{(1)}$. This proves statement (ii)(a).

(b) If $\beta_{0}\neq 0$, then (ii)(a) implies that $\ccD^{p^{n-1}}$ is conjugate under $G$ to 
\[
\partial^{p^{n-1}}+x^{(p^{n}-p^{n-1})}\sum_{i=2}^{p^{n-1}-1}\nu_{i}x^{(i)}\del
\]
for some $\nu_{i}\in k$. If $q$ is the smallest index such that $\nu_{q}\neq 0$, then 
\begin{align*}
\ccD^{p^{n}}=&\sum_{i=q}^{p^{n-1}-1}\nu_{i}x^{(i)}\del+\mu_{(n)}x^{(p^{n}-1)}\del
\end{align*}
for some $\mu_{(n)}\in k$. As $\nu_{q}\neq 0$, this implies that $\ccD^{p^{n}}\neq 0$, a contradiction. Hence $\nu_{i}=0$ for all $i$. Therefore, we are interested in the set 
\begin{align*}
\mathcal{S}:=\{\ccD\in \big(\del^{p^{n-1}}+\sum_{i=1}^{n-2}k\del^{p^{i}}+\fL\big)\cap \cN\,|\, \text{$\ccD^{p^{n-1}}$ is conjugate under $G$ to $\partial^{p^{n-1}}$}\}.
\end{align*}
Since $[\ccD, \ccD^{p^{n-1}}]=0$, the above set $\mathcal{S}$ is a subset of the centraliser $\fz_{\fL_{p}}(\del^{p^{n-1}})$ of $\del^{p^{n-1}}$ in $\fL_p$. It is easy to verify that $\fz_{\fL_{p}}(\del^{p^{n-1}})$ is spanned by $\del^{p^{n-1}}$ and $W(1,n-1)_{p}$. Since $W(1,n-1)_{p}$ is a restricted Lie subalgebra of $\fL_{p}$, we may regard the automorphism group of $W(1,n-1)_{p}$ as a subgroup of $G$. Let $\ccD=\del^{p^{n-1}}+\sum_{i=1}^{n-2}\gamma_{i}\del^{p^{i}}+n$ be an element of $\fz_{\fL_{p}}(\del^{p^{n-1}})$, where $\gamma_{i}\in k$ and $n\in W(1,n-1)$. If $n=0$, then $\ccD^{p^{n-1}}=0$ which is not conjugate to $\del^{p^{n-1}}$. So $n\neq 0$. If $n\not\in W(1, n-1)_{(0)}$, then $n=\gamma_{0}\del$ for some $\gamma_{0}\neq 0$. It is easy to see that $\ccD^{p^{n-1}}$ is conjugate under $G$ to $\del^{p^{n-1}}$. If $n \in W(1, n-1)_{(0)}$, then $n=\sum_{i=1}^{p^{n-1}-1}\lambda_{i}x^{(i)}\del$ with $\lambda_{i}\neq 0$ for some $i$. It follows from \thref{l:l13} that $\ccD$ is conjugate under $G$ to
\begin{align*}
\del^{p^{n-1}}+\sum_{i=1}^{n-2}\gamma_{i}\del^{p^{i}}+x^{(p^n-p^{n-1})}\sum_{i=0}^{p^{n-1}-1}\lambda_{i}'x^{(i)}\del
\end{align*}
for some $\lambda_{i}'\in k$. If $\gamma_{i}=0$ for all $i$, then (i) of this lemma implies that $\ccD^{p^{n-1}} \in \fL_{(1)}$ which is not conjugate to $\del^{p^{n-1}}$. Similarly, if $j\geq 1$ is the smallest index such that $\gamma_{j}\neq 0$, then (ii)(a) of this lemma implies that $\ccD^{p^{n-1}}\in \fL_{(1)}$ which is again not conjugate to $\del^{p^{n-1}}$. Therefore, the set $\mathcal{S}$ consists of elements of the form $\ccD=\del^{p^{n-1}}+\sum_{i=0}^{n-2}\gamma_{i}\del^{p^{i}}$ with $\gamma_{i}\in k$ such that $\gamma_{0}\neq 0$. This proves statement (ii)(b).
\end{proof}

\begin{cor}\thlabel{c:c2}
Let $\ccD=\partial^{p^{m}}+\sum_{i=0}^{m-1}\alpha_{i}\partial^{p^{i}}+x^{(p^n-p^{m})}\sum_{i=0}^{p^{m}-1}\mu_{i}x^{(i)}\del$ with $1\leq m \leq n-2$ be a nilpotent element of $\fL_p$.
\begin{enumerate}[\upshape(i)]
\item If $\alpha_{i}=0$ for all $i$, then $\ccD^{p^{n-1}}\in \fL_{(1)}$.
\item 
\begin{enumerate}[\upshape(a)]
\item If $q\geq 0$ is the smallest index such that $\alpha_{q}\neq 0$, then $\ccD^{p^{n-1-q}}$ is conjugate under $G$ to 
\[
\del^{p^{n-1}}+x^{(p^{n}-p^{n-1})}\sum_{i=2}^{p^{n-1}-1}\nu_{i}x^{(i)}\del
\]
for some $\nu_{i}\in k$. Hence $\ccD^{p^{n-1}}\in \fL_{(1)}$ for all $q \geq 1$.
\item In particular, if $\alpha_{0}\neq 0$, then $\ccD^{p^{n-1}}$ is conjugate under $G$ to $\del^{p^{n-1}}$. Hence $\ccD=\del^{p^{m}}+\sum_{i=0}^{m-1}\gamma_{i}\del^{p^{i}}$ for some $\gamma_{i}\in k$ with $\gamma_{0}\neq 0$.
\end{enumerate}
\end{enumerate}
\end{cor}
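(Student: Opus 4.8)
The statement is the exact analogue of \thref{p:p1} with the leading Frobenius term $\partial^{p^m}$ in place of $\partial^{p^{n-1}}$, so the plan is to reduce to \thref{p:p1} by passing from $\ccD$ to its $p^{n-1-m}$-th power, which raises the leading term to $\partial^{p^{n-1}}$. First I would show that $\ccD^{p^{n-1-m}}$ is conjugate under $G$ to an element of the normal form treated in \thref{p:p1}, namely
\[
\del^{p^{n-1}}+\sum_{i}\beta_i'\del^{p^i}+x^{(p^n-p^{n-1})}\sum_{i=0}^{p^{n-1}-1}\mu_i'x^{(i)}\del .
\]
This is carried out just as the induction in the proof of \thref{p:p1}(ii)(a): at each of the $n-1-m$ steps one computes a $p$-th power by Jacobson's formula~\eqref{J1}, splitting off the Frobenius part $\ccD_2$ (whose $p$-th power merely shifts every $\del^{p^i}$ to $\del^{p^{i+1}}$, since the $\del^{p^i}$ commute) from the function part $\ccD_1$, and uses the natural filtration to check that all cross terms $s_i(\ccD_1,\ccD_2)$ stay in high filtration; one then renormalises into standard form via \thref{l:l13} and \thref{c:c1}. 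Crucially, while the leading term is still $\del^{p^s}$ with $s\leq n-2$, the operator $(\ad\del^{p^s})^{p-1}$ lowers a divided power $x^{(p^n-p^s+i)}$ only to $x^{(p^n-p^{s+1}+i)}$, which is still in $\fL_{(1)}$, so no low-degree function term and no new Frobenius term is produced during the reduction. Consequently the coefficient $\alpha_i$ of $\del^{p^i}$ gives rise to a coefficient proportional to $\alpha_i^{p^{n-1-m}}$ of $\del^{p^{i+n-1-m}}$, so the smallest index $q$ with $\alpha_q\neq0$ produces the smallest lower index $j=q+(n-1-m)$ in the resulting \thref{p:p1}-element, while $\alpha_i=0$ for all $i$ produces the case $\beta_i'=0$ for all $i$.

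With this reduction in hand the three statements follow from the corresponding parts of \thref{p:p1}. For (i), $\ccD^{p^{n-1-m}}$ is conjugate to a \thref{p:p1}-element with all $\beta_i'=0$, and the proof of \thref{p:p1}(i) shows that its $p$-th power already lies in $\fL_{(1)}$; hence $\ccD^{p^{n-m}}\in\fL_{(1)}$, and since $\fL_{(1)}$ is restricted and $G$-stable and $m\geq1$ we get $\ccD^{p^{n-1}}=(\ccD^{p^{n-m}})^{p^{m-1}}\in\fL_{(1)}$. For (ii)(a), the smallest lower index is $j=q+(n-1-m)$ with $1\leq j\leq n-2$; applying \thref{p:p1}(ii)(a) to $\ccD^{p^{n-1-m}}$ gives that its $p^{n-1-j}$-th power is conjugate under $G$ to $\del^{p^{n-1}}+x^{(p^n-p^{n-1})}\sum_{i=2}^{p^{n-1}-1}\nu_ix^{(i)}\del$, and since $n-1-j=m-q$ this power is $\ccD^{p^{n-1-q}}$, exactly the claim. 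A single further $p$-th power kills the leading $\del^{p^{n-1}}$ and sends the remaining function into $\fL_{(1)}$, so $\ccD^{p^{n-q}}\in\fL_{(1)}$; when $q\geq1$ this yields $\ccD^{p^{n-1}}\in\fL_{(1)}$ because $n-q\leq n-1$ and $\fL_{(1)}$ is restricted.

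Finally, for (ii)(b) I take $q=0$, so (ii)(a) shows $\ccD^{p^{n-1}}$ is conjugate under $G$ to $\del^{p^{n-1}}+x^{(p^n-p^{n-1})}\sum_{i=2}^{p^{n-1}-1}\nu_ix^{(i)}\del$. Then, exactly as in the proof of \thref{p:p1}(ii)(b), the relation $\ccD^{p^n}=0$ forces every $\nu_i$ to vanish: if $\nu_{q'}$ were the first nonzero coefficient, $\ccD^{p^n}$ would have nonzero lowest term $\nu_{q'}x^{(q')}\del$, a contradiction. Hence $\ccD^{p^{n-1}}$ is conjugate to $\del^{p^{n-1}}$. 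For the structural conclusion $\ccD=\del^{p^m}+\sum_{i=0}^{m-1}\gamma_i\del^{p^i}$ with $\gamma_0\neq0$, I would reuse the centraliser argument of \thref{p:p1}(ii)(b): after replacing $\ccD$ by a $G$-conjugate with $\ccD^{p^{n-1}}=\del^{p^{n-1}}$, the element lies in $\fz_{\fL_p}(\del^{p^{n-1}})$, which is spanned by $\del^{p^{n-1}}$ and $W(1,n-1)_p$, and imposing nilpotency together with the leading Frobenius term $\del^{p^m}$ forces the function part to vanish. I expect the main obstacle to be the bookkeeping in the first paragraph—verifying through the iterated $p$-power computation that each intermediate function part renormalises into standard form without producing spurious low-filtration or low-index terms, and correctly tracking the index shift $j=q+(n-1-m)$—together with adapting the centraliser identification in (ii)(b) to the leading term $\del^{p^m}$ rather than $\del^{p^{n-1}}$.
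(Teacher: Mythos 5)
Your proposal is correct and follows essentially the same route as the paper's own proof: the same induction using \thref{c:c1} (and \thref{l:l13}) to raise the leading term to $\del^{p^{n-1}}$ while the Frobenius coefficients get Frobenius-twisted and shifted, the same application of \thref{p:p1}(i), (ii)(a) and (ii)(b) with the index shift $j=q+(n-1-m)$ (so that $(\ccD^{p^{n-1-m}})^{p^{n-1-j}}=\ccD^{p^{n-1-q}}$), and the same descent to $\ccD^{p^{n-1}}\in\fL_{(1)}$ via $G$-invariance and restrictedness of $\fL_{(1)}$. If anything, your treatment of (ii)(b) is slightly more explicit than the paper's, which simply cites \thref{p:p1}(ii)(b), whereas you correctly note that its nilpotency and centraliser arguments must be rerun with leading term $\del^{p^{m}}$ in place of $\del^{p^{n-1}}$.
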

\begin{proof}
Take $\ccD$ as in the corollary. By \thref{c:c1}, one can prove by induction that for any $1\leq r\leq n-1-m$, $\ccD^{p^{r}}$ is conjugate under $G$ to 
\begin{align*}
\partial^{p^{m+r}}+\sum_{i=0}^{m-1}\alpha_{i}^{p^{r}}\del^{p^{i+r}}+x^{(p^n-p^{m+r})}\sum_{i=0}^{p^{m+r}-1}\mu_{i,(r)}x^{(i)}\del
\end{align*}
for some $\mu_{i,(r)} \in k$. In particular, $\ccD^{p^{n-1-m}}$ is conjugate under $G$ to 
\[
\partial^{p^{n-1}}+\sum_{i=0}^{m-1}\alpha_{i}^{p^{n-1-m}}\del^{p^{i+n-1-m}}+x^{(p^n-p^{n-1})}\sum_{i=0}^{p^{n-1}-1}\mu_{i,(n-1-m)}x^{(i)}\del.
\]

(i) If $\alpha_{i}=0$ for all $i$, then \thref{p:p1}(i) implies that $\mu_{0,(n-1-m)}=\mu_{1,(n-1-m)}=0$. By Jacobson's formula, 
\begin{align*}
\ccD^{p^{n-m}}=\sum_{i=2}^{p^{n-1}-1}\mu_{i,(n-1-m)}x^{(i)}\del+\mu_{(n-m)}x^{(p^n-1)}\del 
\end{align*}
for some $\mu_{(n-m)}\in k$. This is an element of $\fL_{(1)}$. Since $G$ preserves $\fL_{(1)}$, this implies that $\ccD^{p^{n-m}}\in \fL_{(1)}$. As $\fL_{(1)}$ is restricted, we have that $\ccD^{p^{n-1}}\in \fL_{(1)}$. This proves statement (i).

(ii) If $q\geq 0$ is the smallest index such that $\alpha_{q}\neq 0$, then $\ccD^{p^{n-1-m}}$ is conjugate under $G$ to
\[
\partial^{p^{n-1}}+\sum_{i=q}^{m-1}\alpha_{i}^{p^{n-1-m}}\del^{p^{i+n-1-m}}+x^{(p^n-p^{n-1})}\sum_{i=0}^{p^{n-1}-1}\mu_{i,(n-1-m)}x^{(i)}\del.
\]
It follows from \thref{p:p1}(ii)(a) that $\ccD^{p^{n-1-q}}$ is conjugate under $G$ to 
\[\del^{p^{n-1}}+x^{(p^n-p^{n-1})}\sum_{i=2}^{p^{n-1}-1}\nu_{i}x^{(i)}\del\]
for some $\nu_{i}\in k$. Suppose now $q\geq 1$, then it is easy to see that $\ccD^{p^{n-q}}$ is an element of $\fL_{(1)}$. Since $\fL_{(1)}$ is restricted, we have that $\ccD^{p^{n-1}}\in \fL_{(1)}$. This proves statement (ii)(a). 

If $\alpha_{0}\neq 0$, then the result follows from above and \thref{p:p1}(ii)(b). This proves statement (ii)(b).
\end{proof}

\subsection{} The calculations in the last subsection enables us to identify an irreducible component of $\cN$.

\begin{lem}\thlabel{l:l14}
Define $\cN_\text{reg}:=\{\ccD\in\cN \,|\,\ccD^{p^{n-1}}\notin \fL_{(0)}\}$. Then 
\[
\cN_\text{reg}=G.(k^*\del+k\del^p+\dots+k\del^{p^{n-1}}).
\]
\end{lem}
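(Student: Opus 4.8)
The plan is to prove the two inclusions separately, with the bulk of the work going into showing $\cN_\text{reg} \subseteq G.(k^*\del + k\del^p + \dots + k\del^{p^{n-1}})$, since the reverse inclusion is essentially a computation that this set lands in $\cN$ and that the relevant $p^{n-1}$-th power escapes $\fL_{(0)}$.

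For the inclusion $G.(k^*\del + k\del^p + \dots + k\del^{p^{n-1}}) \subseteq \cN_\text{reg}$, I would first check that any element $E = \alpha_{n-1}\del^{p^{n-1}} + \dots + \alpha_1\del^p + \alpha_0\del$ with $\alpha_0 \neq 0$ is nilpotent and satisfies $E^{p^{n-1}} \notin \fL_{(0)}$. Since the $\del^{p^i}$ all lie in the abelian... one must be careful here: the subalgebra $k\del + k\del^p + \dots + k\del^{p^{n-1}}$ is a torus-like restricted subalgebra, and Jacobson's formula together with \eqref{eip} should give $E^{p^{n-1}} \equiv \alpha_0^{p^{n-1}}\del^{p^{n-1}} \pmod{\fL_{(0)}}$, which is nonzero in $\fL_p/\fL_{(0)}$ precisely because $\alpha_0 \neq 0$. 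This simultaneously shows $E^{p^n} = 0$ (hence $E \in \cN$) and $E^{p^{n-1}} \notin \fL_{(0)}$. Because $G$ acts by automorphisms of $\fL_p$ preserving both $\cN$ and the condition of lying outside $\fL_{(0)}$ (as $G$ respects the natural filtration), the whole $G$-orbit lies in $\cN_\text{reg}$.

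For the harder inclusion $\cN_\text{reg} \subseteq G.(k^*\del + \dots + k\del^{p^{n-1}})$, I would take an arbitrary $\ccD \in \cN_\text{reg}$ and reduce it to the desired normal form using the structural results already established. Write $\ccD = \sum_{i=0}^{n-1}\alpha_i\del^{p^i} + f(x)\del$; after scaling by $G$ (the automorphism $\Phi(x) = \alpha x$) one may assume the top nonzero power is monic. The key point is that the condition $\ccD^{p^{n-1}} \notin \fL_{(0)}$ rules out all the degenerate cases catalogued in \thref{p:p1} and \thref{c:c2}: parts (i) and (ii)(a) of both results show that whenever the leading $\del^{p^i}$-coefficient vanishes or the smallest index $j$ with $\beta_j \neq 0$ satisfies $j \geq 1$, one gets $\ccD^{p^{n-1}} \in \fL_{(1)} \subseteq \fL_{(0)}$, contradicting membership in $\cN_\text{reg}$. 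Hence the leading term must be $\del^{p^{n-1}}$ (after normalization) and the coefficient $\beta_0$ (equivalently $\alpha_0$) must be nonzero; then \thref{p:p1}(ii)(b) and \thref{c:c2}(ii)(b) force $\ccD$ into the form $\del^{p^{n-1}} + \sum_{i=0}^{n-2}\gamma_i\del^{p^i}$ with $\gamma_0 \neq 0$, which is exactly an element of $k^*\del + k\del^p + \dots + k\del^{p^{n-1}}$.

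The main obstacle will be bookkeeping the case analysis cleanly: one must first apply \thref{l:l13} (and \thref{c:c1} when the leading term is $\del^{p^m}$ with $m < n-1$) to bring $\ccD$ into the standard form $\del^{p^m} + \sum\beta_i\del^{p^i} + x^{(p^n-p^m)}h(x)\del$ to which the classification lemmas apply, and only then invoke the dichotomy. I expect the subtlety to lie in confirming that the $G$-conjugation used to reach the normal form in \thref{p:p1}(ii)(b) does not interfere with the already-established leading structure, and in verifying that the centraliser argument at the end of \thref{p:p1}(ii)(b)---identifying $\fz_{\fL_p}(\del^{p^{n-1}})$ with $k\del^{p^{n-1}} \oplus W(1;n-1)_p$---indeed captures every $\ccD \in \cN_\text{reg}$ once we know $\ccD^{p^{n-1}}$ is conjugate to $\del^{p^{n-1}}$. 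Combining both inclusions yields the claimed equality.
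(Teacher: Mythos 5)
Your strategy coincides with the paper's: prove both inclusions, the forward one by the commuting-powers computation plus $G$-equivariance, the reverse one by passing to the normal forms of \thref{l:l13}/\thref{c:c1} and then invoking the trichotomy of \thref{p:p1}/\thref{c:c2}. However, there is a genuine gap in your reverse inclusion: you miss the case where the largest index $m$ with $\alpha_m\neq 0$ is $m=0$, i.e. $\ccD=\alpha_0\del+f(x)\del$ with $\alpha_0\neq 0$ and no $\del^{p^i}$-components for $i\geq 1$. None of the four results you lean on applies here: \thref{l:l13} and \thref{p:p1} require leading term $\del^{p^{n-1}}$, while \thref{c:c1} and \thref{c:c2} require $\del^{p^m}$ with $1\leq m\leq n-2$; moreover their proofs rest on the admissible automorphisms $x\mapsto x+\gamma x^{(p^m+i)}$, which have no analogue when $m=0$. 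Your assertion that ``the leading term must be $\del^{p^{n-1}}$ (after normalization)'' is false --- $\del$ itself lies in $\cN_\text{reg}$ --- and it conceals exactly this missing case.

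The paper disposes of that case with different tools: after the scaling $\Phi(x)=\alpha_0 x$ one may assume $\ccD=\del+g(x)\del$; by \cite[Lemma~1]{B75} this is conjugate under $G$ to $\del+\sum_{i=1}^{n}\beta_i x^{(p^i-1)}\del$, and by \cite[Proposition~4.3]{YS16} such an element is nilpotent if and only if all $\beta_i$ vanish, whence $\ccD$ is conjugate to $\del$. Without this input (or some substitute) your argument does not close: a priori there could be nilpotent elements of the form $\del+g(x)\del$ not conjugate to $\del$, and these would lie in $\cN_\text{reg}$ but outside $G.(k^*\del+k\del^p+\dots+k\del^{p^{n-1}})$. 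Two smaller points: you should also dismiss the case $\alpha_i=0$ for all $i$, where $\ccD\in\fL_{(0)}$ and restrictedness of $\fL_{(0)}$ immediately gives $\ccD\notin\cN_\text{reg}$; and when $1\leq m\leq n-2$ the conclusion of \thref{c:c2}(ii)(b) is $\ccD=\del^{p^m}+\sum_{i=0}^{m-1}\gamma_i\del^{p^i}$, whose leading term is $\del^{p^m}$, not $\del^{p^{n-1}}$ --- this is harmless for the lemma, since such elements still lie in $k^*\del+k\del^p+\dots+k\del^{p^{n-1}}$, but your phrasing conflates the two.
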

\begin{proof}
Since $(\del+\sum_{i=1}^{n-1}\alpha_{i}\del^{p^{i}})^{p^{n}}=0$ and $(\del+\sum_{i=1}^{n-1}\alpha_{i}\del^{p^{i}})^{p^{n-1}}=\del^{p^{n-1}}$, this shows that any elements which are conjugate under $G$ to $\del+\sum_{i=1}^{n-1}\alpha_{i}\del^{p^{i}}$ are contained in $\cN_\text{reg}$. So $G.(k^*\del+k\del^p+\dots+k\del^{p^{n-1}})\subseteq \cN_\text{reg}$. To show that $\cN_\text{reg}\subseteq G.(k^*\del+k\del^p+\dots+k\del^{p^{n-1}})$, we observe that if $\ccD\in \fL_{(1)}$, then $\ccD$ is nilpotent and $\ccD^{p^{n-1}}\in \fL_{(1)}\subset \fL_{(0)}$. Hence $\ccD\not\in \cN_\text{reg}$. Therefore, $\cN_\text{reg}\subseteq \cN\setminus\fL_{(1)}$.

Note that elements of $\cN\setminus\fL_{(1)}$ have the form $\ccD=\sum_{i=0}^{n-1}\alpha_{i}\partial^{{p^{i}}}+f(x)\partial$ for some $f(x)\in \fm$ and $\alpha_i \in k $ with at least one $\alpha_{i}\neq 0$. If $\alpha_{0}\neq 0$ and $\alpha_{i}= 0$ for all $i\geq 1$, then $\ccD=\alpha_{0}\partial+f(x)\partial$. Hence 
\[\ccD^{p^{n-1}}=\alpha_0^{p^{n-1}}\del^{p^{n-1}}+\sum_{i=0}^{n-2}\alpha_i'\del^{p^{i}}+w\]
for some $\alpha_i'\in k$ and $w\in \fL_{(0)}$. As $\alpha_0\neq 0$, this implies that $\ccD^{p^{n-1}} \not \in \fL_{(0)}$ and so $\ccD \in \cN_\text{reg}$. Apply the automorphism $\Phi(x)=\alpha_{0}x$ to $\ccD$, we may assume that $\ccD$ has the form 
\[\partial+g(x)\partial\]
for some $g(x) \in \fm$. By \cite[Lemma~1]{B75}, $\ccD$ is conjugate under $G$ to 
\[\partial+\sum_{i=1}^{n}\beta_{i}x^{(p^{i}-1)}\partial\]
for some $\beta_{i}\in k$. Then it follows from  \cite[Proposition~4.3]{YS16} that $\ccD$ is nilpotent if and only if $\beta_{i}=0$ for all $i$. Consequently, $\ccD$ is conjugate under $G$ to $\partial$. Thus $\cN_\text{reg}\subseteq G.(k^*\del+k\del^p+\dots+k\del^{p^{n-1}})$ in this case. 

For the other elements of $\cN\setminus\fL_{(1)}$, let $1\leq t\leq n-1$ be the largest index such that $\alpha_{t}\neq 0$, i.e. $\ccD=\sum_{i=0}^{t}\alpha_{i}\partial^{{p^{i}}}+f(x)\partial$. Then the automorphism $\Phi(x)=\alpha x$ with $\alpha^{p^{t}}=\alpha_t$ reduces $\ccD$ to the form 
\[\ccD=\del^{p^{t}}+\sum_{i=0}^{t-1}\beta_{i}\partial^{{p^{i}}}+g(x)\del\]
for some $\beta_{i}\in k^{*}\alpha_i$ and $g(x)\in \fm$. It follows from \thref{l:l13} and \thref{c:c1} that $\ccD$ is conjugate under $G$ to 
\[
\del^{p^{t}}+\sum_{i=0}^{t-1}\beta_{i}\partial^{{p^{i}}}+x^{(p^n-p^t)}\sum_{i=0}^{p^t-1}\mu_{i}x^{(i)}\del\]
for some $\mu_{i}\in k$. If $\beta_{i}=0$ for all $i$, then \thref{p:p1} and \thref{c:c2}(i) imply that $\ccD^{p^{n-1}}\in \fL_{(1)}\subset \fL_{(0)}$. If $j\geq 1$ is the smallest index such that $\beta_{j}\neq 0$, then \thref{p:p1} and \thref{c:c2}(ii)(a) imply that $\ccD^{p^{n-1}}\in \fL_{(1)}\subset \fL_{(0)}$. Hence in both cases $\ccD$ is not in $\cN_\text{reg}$. But if $\beta_{0}\neq 0$, then it is easy to see that $\ccD^{p^{n-1}} \not\in \fL_{(0)}$. So $\ccD \in \cN_\text{reg}$. Moreover, it follows from \thref{p:p1} and \thref{c:c2}(ii)(b) that $\ccD$ is conjugate under $G$ to $\del^{p^{t}}+\sum_{i=0}^{t-1}\gamma_{i}\del^{p^{i}}$ for some $\gamma_{i}\in k$ with $\gamma_{0}\neq 0$. Hence $\cN_\text{reg}\subseteq G.(k^*\del+k\del^p+\dots+k\del^{p^{n-1}})$ in this case. Since we have exhausted all elements of $\cN_\text{reg}$, this completes the proof.
\end{proof}
Before we proceed to show that the Zariski closure of $\cN_\text{reg}$ is an irreducible component of $\cN$, we need the following results.
\begin{lem}\thlabel{kerd}
Let $\ccD=\del+\sum_{i=1}^{n-1}\lambda_i\del^{p^{i}}$ with $\lambda_i\in k$ and denote by $\fz_\fL(\ccD)$ $($respectively $\fz_{\fL_{p}}(\ccD))$ the centraliser of $\ccD$ in $\fL$ $($respectively $\fL_p)$. Then
\begin{enumerate}[\upshape(i)]
\item $\fz_\fL(\ccD)=\text{span$\{\del\}$}$.
\item $\fz_{\fL_{p}}(\ccD)=\text{span$\{\del, \del^p, \dots, \del^{p^{n-1}}$\}}$.
\item $\fz_\fL(\ccD)\cap \Lie(G)=\{0\}$.
\end{enumerate}
\end{lem}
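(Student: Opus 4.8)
The plan is to reduce all three parts to a single computation: the kernel of $\ccD$ viewed as a linear operator on $\cO(1;n)$. First I would record two bracket identities in $\Der(\fL)=\fL_p$. For any $f\in\cO(1;n)$ the operator $\del^{p^{i}}$ is a derivation of $\cO(1;n)$ commuting with $\del$, so $[\del^{p^{i}},f\del]=\del^{p^{i}}(f)\del$, and together with $[\del,f\del]=\del(f)\del$ this gives $[\ccD,f\del]=\ccD(f)\del$, where on the right $\ccD$ is read as the operator $\del+\sum_{i=1}^{n-1}\lambda_i\del^{p^{i}}$ acting on $\cO(1;n)$. Secondly, since $\del$ and all the $\del^{p^{i}}$ are mutually commuting powers of $\del$, one has $[\ccD,\del^{p^{j}}]=0$ for every $j$. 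Consequently, for a general $E=f\del+\sum_{i=1}^{n-1}c_i\del^{p^{i}}\in\fL_p$ the whole $\del^{p^{i}}$-part drops out and $[\ccD,E]=\ccD(f)\del$, so the centraliser condition $[\ccD,E]=0$ is equivalent to $f\in\ker(\ccD|_{\cO(1;n)})$.

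The heart of the argument is then computing this kernel. Using $\del^{p^{i}}(x^{(a)})=x^{(a-p^{i})}$ (with $x^{(b)}:=0$ for $b<0$), I get $\ccD(x^{(a)})=x^{(a-1)}+\sum_{i=1}^{n-1}\lambda_i x^{(a-p^{i})}$, whose top-degree term is $x^{(a-1)}$ for $1\leq a\leq p^{n}-1$, since each $p^{i}>1$. As the leading terms $x^{(0)},\dots,x^{(p^{n}-2)}$ are linearly independent, $\ccD$ is injective on $\text{span}\{x^{(1)},\dots,x^{(p^{n}-1)}\}$; combined with $\ccD(1)=0$, a dimension count forces $\ker(\ccD|_{\cO(1;n)})=k\cdot 1$. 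This is exactly (i): $\fz_\fL(\ccD)=k\del$. Feeding the kernel back into the general computation proves (ii): $E$ centralises $\ccD$ iff $f\in k\cdot 1$, i.e. $E\in\text{span}\{\del,\del^{p},\dots,\del^{p^{n-1}}\}$, and the reverse inclusion is immediate because this span is commutative and contains $\ccD$.

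For (iii) I would simply combine (i) with the location of $\Lie(G)$. By (i), $\fz_\fL(\ccD)=k\del=kd_{-1}$, whereas $\Lie(G)\subseteq\fL_{(0)}$ (equivalently, by \thref{LieG}, $\Lie(G)$ is spanned by certain $d_i$ with $i\geq 0$), and $d_{-1}\notin\fL_{(0)}$. Hence no nonzero multiple of $\del$ lies in $\Lie(G)$, giving $\fz_\fL(\ccD)\cap\Lie(G)=\{0\}$. The only point requiring genuine care is the bracket bookkeeping of the first paragraph, namely verifying that the operator power $\del^{p^{i}}$ really acts on $\fL$ by $f\del\mapsto\del^{p^{i}}(f)\del$ and commutes with $\ccD$; once that reduction is in place, everything collapses to the elementary leading-term analysis of $\ker(\ccD|_{\cO(1;n)})$, so I do not expect a serious obstacle.
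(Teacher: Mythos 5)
Your proposal is correct, but it reaches the heart of the lemma, part (i), by a genuinely different route than the paper. The paper's proof of (i) is spectral: it invokes the facts that $(\ad\ccD)^{p^{n}-1}\neq 0$ and $(\ad\ccD)^{p^{n}}=0$, so that $\ad\ccD$ acts on the $p^{n}$-dimensional space $\fL$ as a single nilpotent Jordan block, whence $\ker(\ad\ccD|_{\fL})$ is one-dimensional and must equal $k\del$. You instead exploit the $\cO(1;n)$-module structure of $\fL$: the identity $[\ccD,f\del]=\ccD(f)\del$ (valid because each $\del^{p^{i}}$ is a derivation of $\cO(1;n)$ commuting with $\del$) reduces the whole centraliser computation to $\ker\bigl(\ccD|_{\cO(1;n)}\bigr)$, which you determine by a leading-term (triangularity) argument using $\del^{p^{i}}(x^{(a)})=x^{(a-p^{i})}$. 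Your route is more self-contained: the paper's hinges on the assertion $(\ad\ccD)^{p^{n}-1}\neq 0$, which it states without proof, whereas your argument needs nothing beyond the product rule; what the paper's argument buys in exchange is the stronger structural fact that $\ad\ccD$ is a single Jordan block, which is not actually needed for the lemma. For (ii) the two proofs are close in spirit — the paper writes $v=\sum_{i}\alpha_{i}\del^{p^{i}}+v_{1}$ with $v_{1}\in\fL_{(0)}$ and reduces to (i), while your unified bracket formula yields (ii) in one stroke — and for (iii) the proofs coincide (combine (i) with \thref{LieG} and $\Lie(G)\subseteq\fL_{(0)}$). The one point you rightly flag as needing care, the identification of $\del^{p^{i}}\in\fL_{p}\subseteq\Der(\fL)$ with the operator power $\del^{p^{i}}\in\Der(\cO(1;n))$ acting on $\fL$ by $f\del\mapsto\del^{p^{i}}(f)\del$, is the standard one (via $\ad(D^{[p]})=(\ad D)^{p}$) and is used implicitly throughout the paper, for instance in the proof of \thref{l:l13}, so it poses no obstacle.
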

\begin{proof}
(i) Clearly, $\text{span$\{\del\}$} \subseteq \fz_\fL(\ccD)$. Since $(\ad\ccD)^{p^{n}-1}\neq 0$ and $(\ad\ccD)^{p^{n}}=0$, the theory of canonical Jordan normal form says that there exists a basis $\mathcal{B}$ of $\fL$ such that the matrix of $\ad\ccD$ with respect to $\mathcal{B}$ is a single Jordan block of size $p^n$ with zeros on the main diagonal. Hence the matrix of $\ad\ccD$ has rank $p^n-1$. This implies that $\ker (\ad\ccD)$ has dimension $1$. By definition, $\ker (\ad\ccD)=\fz_\fL(\ccD)$. Hence $\fz_\fL(\ccD)=\text{span$\{\del\}$}$. 

(ii) It is clear that $\text{span$\{\del, \del^p, \dots, \del^{p^{n-1}}\}$}\subseteq \fz_{\fL_{p}}(\ccD)$. Suppose $v\in \fz_{\fL_p}(\ccD)$. Then we can write $v=\sum_{i=0}^{n-1}\alpha_i\del^{p^{i}}+v_1$ for some $v_1 \in \fL_{(0)}$. Since $\sum_{i=0}^{n-1}\alpha_i\del^{p^{i}}\in \fz_{\fL_p}(\ccD)$, we must have that $v_1 \in \fz_{\fL_p}(\ccD)$. By (i), the centraliser of $\ccD$ in $\fL$ is $k\del$ which is not in $\fL_{(0)}$. Hence $v_1=0$ and $\fz_{\fL_{p}}(\ccD)=\text{span$\{\del, \del^p, \dots, \del^{p^{n-1}}$\}}$.

(iii) It follows from (i) and \thref{LieG}. This completes the proof of the lemma.
\end{proof}

\begin{lem}\thlabel{l:l15}
The Zariski closure of $\cN_\text{reg}$ is an irreducible component of $\cN$.
\end{lem}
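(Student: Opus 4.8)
The plan is to realise $\overline{\cN_\text{reg}}$ as the closure of the image of a morphism from an irreducible variety of dimension $p^n-1$, to prove that this morphism is generically finite, and then to conclude via the equidimensionality of $\cN$. First, since $\Phi_\alpha\colon x\mapsto\alpha x$ lies in $G$ for every $\alpha\in k^*$ and sends $\del\mapsto\alpha^{-1}\del$ and $\del^{p^i}\mapsto\alpha^{-p^i}\del^{p^i}$, the scalings absorb the leading coefficient, so I would rewrite the description $\cN_\text{reg}=G.(k^*\del+k\del^p+\dots+k\del^{p^{n-1}})$ from \thref{l:l14} as $\cN_\text{reg}=G.\cU$, where $\cU:=\del+\sum_{i=1}^{n-1}k\del^{p^i}$ is an affine subspace of dimension $n-1$. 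As $G$ is connected and $\cU$ is irreducible, the image $\cN_\text{reg}$ of the action morphism $\psi\colon G\times\cU\to\fL_p$ is irreducible, hence so is $\overline{\cN_\text{reg}}$. Because $\cN$ is equidimensional of dimension $p^n-1$, it then suffices to prove $\dim\overline{\cN_\text{reg}}=p^n-1$: an irreducible closed subset of $\cN$ of this dimension lies in some component, which has the same dimension and therefore equals it.

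The heart of the argument is to compute $\dim\overline{\cN_\text{reg}}$ through $\psi$. Here $\dim(G\times\cU)=(p^n-n)+(n-1)=p^n-1$, so it is enough to show $\psi$ has a finite generic fibre. I would fix $\ccD=\del+\sum_{i=1}^{n-1}\lambda_i\del^{p^i}\in\cU$ and examine the differential at $(e,\ccD)$,
\[
d\psi_{(e,\ccD)}\colon \Lie(G)\oplus W\lra\fL_p,\qquad (d,v)\longmapsto [d,\ccD]+v,
\]
where $W:=\mathrm{span}\{\del^p,\dots,\del^{p^{n-1}}\}=T_\ccD\cU$. The key structural input is the vector space decomposition $\fL_p=\fL\oplus W$ together with the observation that $[\Lie(G),\ccD]\subseteq\fL$: indeed $\Lie(G)\subseteq\fL_{(0)}\subseteq\fL$ by \thref{LieG}, while for $1\le i\le n-1$ one has $[d,\del^{p^i}]\in\fL$, since $\del^{p^i}$ is a derivation of $\cO(1;n)$ commuting with $\del$, so that $[\del^{p^i},f\del]=\del^{p^i}(f)\del\in\fL$ for all $f$. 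Hence if $[d,\ccD]+v=0$ with $[d,\ccD]\in\fL$ and $v\in W$, the directness of the sum forces $v=0$ and $[d,\ccD]=0$; the latter gives $d\in\fz_\fL(\ccD)\cap\Lie(G)=\{0\}$ by \thref{kerd}(i) and (iii). Thus $d\psi_{(e,\ccD)}$ is injective for every $\ccD\in\cU$.

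Finally, injectivity of the differential means the fibre $\psi^{-1}(\ccD)$ has trivial Zariski tangent space at $(e,\ccD)$, so $(e,\ccD)$ is an isolated point of that fibre. By upper semicontinuity of fibre dimension, the locus in $G\times\cU$ where the fibre is finite is open and nonempty, hence dense, so $\psi$ is generically finite and $\dim\overline{\cN_\text{reg}}=\dim(G\times\cU)=p^n-1$. Combined with the reduction above, $\overline{\cN_\text{reg}}$ is an irreducible component of $\cN$. I expect the only genuinely delicate point to be the transversality statement $[\Lie(G),\ccD]\subseteq\fL$ and its interplay with the slice $\cU$, i.e. checking that the brackets $[d,\del^{p^i}]$ remain inside $\fL$ so that the splitting $\fL_p=\fL\oplus W$ can be exploited; the characteristic-$p$ subtleties (possible non-separability of the orbit map) are sidestepped, since we only need injectivity of the differential at a single point to force generic finiteness.
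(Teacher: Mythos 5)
Your proof is correct, and its skeleton matches the paper's: realise $\cN_\text{reg}$ as the $G$-saturation of a slice through $\del$ (via \thref{l:l14}), get irreducibility from connectedness of $G$, reduce to proving $\dim\overline{\cN_\text{reg}}=p^n-1$ using equidimensionality of $\cN$, and extract that dimension from the differential of the action map at the identity together with \thref{kerd} and \thref{LieG}. Two genuine differences in execution are worth recording. First, the paper works with the cone $X=k^*\del+k\del^p+\dots+k\del^{p^{n-1}}$ and asserts that $T_\ccD(\overline{G.X})$ contains the direct sum of $T_\ccD(X)$ and the orbit direction; as written this is imprecise, since $\dim X=n$ (not $n-1$), the orbit direction is $[\ccD,\Lie(G)]$ rather than $\Lie(G)$ itself, and the sum is in fact not direct: $[\ccD,x\del]=\del$ lies in both $T_\ccD(X)$ and $[\ccD,\Lie(G)]$. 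These two inaccuracies offset each other and the total $p^n-1$ survives, but your affine slice $\cU=\del+\sum_{i=1}^{n-1}k\del^{p^i}$ repairs the point honestly: $T_\ccD\cU=W$ satisfies $\fL_p=\fL\oplus W$, while $[\Lie(G),\ccD]\subseteq\fL$, so the sum really is direct and injectivity of $d\psi_{(e,\ccD)}$ reduces cleanly to \thref{kerd}(iii). Second, your endgame (an isolated point in the fibre, hence generic finiteness and $\dim\overline{G.\cU}=\dim(G\times\cU)=p^n-1$) replaces the paper's (choose a smooth point of $\overline{G.X}$ lying in $X$ and bound its tangent space); yours sidesteps both the smooth-point selection and, as you note, any separability concerns, at the cost of invoking the fibre-dimension theorem, whereas the paper's route needs only that tangent-space dimension equals local dimension at smooth points. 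Both versions deliver the same count and the same conclusion.
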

\begin{proof}
By \thref{l:l14}, it suffices to show that the Zariski closure of $G.(k^*\del+k\del^p+\dots+k\del^{p^{n-1}})$ is an irreducible component of $\cN$. Put $X:=k^*\del+k\del^p+\dots+k\del^{p^{n-1}}$. Then $X \cong \A^{n-1}$ which is irreducible. Moreover, $G$ is a connected algebraic group so that $\overline{G.X}$ is an irreducible variety contained in $\cN$. Then $\dim \overline{G.X} \leq \dim \cN$. If $\dim \overline{G.X} \geq \dim \cN$, then we get the desired result. 

Define $\Psi$ to be the morphism
\begin{align*}
\Psi: G\times X &\rightarrow \overline{G.X} \\
(g, \ccD)&\mapsto g.\ccD
\end{align*}
Since $G.X$ is dense in $\overline{G.X}$, it contains smooth points of $\overline{G.X}$. As the set of smooth points is $G$-invariant, there exists $\ccD\in X$ such that $\Psi(1, \ccD)=\ccD$ is a smooth point in $\overline{G.X}$. We may assume without loss of generality that $\ccD=\del+\sum_{i=1}^{n-1}\lambda_i\del^{p^{i}}$ for some $\lambda_i\in k$. Then the differential of $\Psi$ at the smooth point $(1, \ccD)$ is the map
\[
d_{(1,\ccD)} \Psi: \Lie(G)\oplus X \rightarrow T_{\ccD}(\overline{G.X}).
\]
Since $\dim T_{\ccD}(\overline{G.X})=\dim \overline{G.X}$, it is enough to show that $\dim T_{\ccD}(\overline{G.X})\geq \dim \cN=p^n-1$. It is easy to see that $T_{\ccD}(\overline{G.X})$ contains $T_{\ccD}(X)=X$ which has dimension $n-1$. Since $\ccD \in X$, $T_{\ccD}(\overline{G.X})$ also contains $T_{\ccD}(G.\ccD)$, the image of $\Lie(G)\oplus \ccD$ under $d_{(1,\ccD)} \Psi$, i.e. 
$T_{\ccD}(G.\ccD)=d_{(1, \ccD)}\Psi(\Lie(G) \oplus \ccD)$. By \thref{kerd}(iii), $\fz_\fL(\ccD)\cap \Lie(G)=\{0\}$, this implies that the restriction of the linear operator $\ad \ccD$ to $\Lie(G)$ has trivial kernel and so the image $[\ccD, \Lie(G)]$ is isomorphic to $\Lie(G)$. Hence 
\[T_{\ccD}(G.\ccD)=d_{(1, \ccD)}\Psi(\Lie(G) \oplus \ccD)=[\ccD, \Lie(G)]\cong \Lie(G).\]
Therefore, $T_{\ccD}(\overline{G.X})$ contains $\Lie(G)$ which has dimension $p^n-n$. It follows from \thref{LieG} that $X \cap \Lie(G)=\{0\}$. Hence $T_{\ccD}(\overline{G.X})$ contains the direct sum $X\oplus \Lie(G)$ of $X$ and $\Lie (G)$. Therefore,
\[
\dim T_{\ccD}(\overline{G.X})\geq \dim \big(X \oplus \Lie(G)\big)=\dim X+\dim \Lie(G)=p^n-1=\dim \cN.
\]
This completes the proof.
\end{proof}

\subsection{}
Our goal is to prove the irreducibility of the variety $\cN$. To achieve this, we need the following result.

\begin{pro}\thlabel{l:l16}
Define $\cN_{\text{sing}}:=\cN\setminus \cN_\text{reg}= \{\ccD \in \cN \,|\, \ccD^{p^{n-1}} \in \fL_{(0)} \}$. Then 
\[
\dim \cN_{\text{sing}}<\dim \cN.
\]
\end{pro}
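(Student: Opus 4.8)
The plan is to realise $\cN_\text{sing}$ as a proper closed subcone of $\cN$ by exhibiting an $(n+1)$-dimensional subspace $V\subseteq\fL_p$ with $V\cap\cN_\text{sing}=\{0\}$, and then to read off the dimension bound from a projective intersection argument; all the work will be in the choice of $V$. As a preliminary I would record that $\cN_\text{sing}$ is a Zariski-closed cone. The iterated restricted power map $\ccD\mapsto\ccD^{p^{n-1}}$ is a morphism $\fL_p\to\fL_p$ (each coordinate is polynomial, by Jacobson's formula), so composing it with the linear projection $\fL_p\tha\fL_p/\fL_{(0)}$ yields a morphism $\pi$, and $\cN_\text{sing}=\cN\cap\pi^{-1}(0)$ is closed; it is a cone because $(\lambda\ccD)^{p^{n-1}}=\lambda^{p^{n-1}}\ccD^{p^{n-1}}$, so both nilpotency and the condition $\ccD^{p^{n-1}}\in\fL_{(0)}$ survive rescaling.

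Granting such a $V$, the estimate is immediate. Writing $N:=\dim\fL_p=p^n+n-1$ and projectivising, $\PP(V)$ (a linear $\PP^{n}$) and $\PP(\cN_\text{sing})$ are disjoint closed subvarieties of $\PP(\fL_p)\cong\PP^{N-1}$. By the projective dimension theorem two closed subvarieties of $\PP^{N-1}$ whose dimensions sum to at least $N-1$ must meet, so disjointness forces $\dim\PP(V)+\dim\PP(\cN_\text{sing})\leq N-2$, whence
\[
\dim\cN_\text{sing}=\dim\PP(\cN_\text{sing})+1\leq N-1-n=p^n-2<p^n-1=\dim\cN,
\]
the last equality being the equidimensionality of $\cN$ recorded in \S\ref{2.1}. (The case $\cN_\text{sing}=\{0\}$ is trivial.)

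It remains to construct $V$, and here the constraints already pin down its shape. Every element of $\fL_{(1)}$ is nilpotent with $\ccD^{p^{n-1}}\in\fL_{(1)}\subset\fL_{(0)}$, so $\fL_{(1)}\subseteq\cN_\text{sing}$ and we need $V\cap\fL_{(1)}=\{0\}$; since $\dim\fL_p/\fL_{(1)}=n+1=\dim V$, the subspace $V$ must be a complement to $\fL_{(1)}$. The obvious complement $k\del\oplus k\del^{p}\oplus\dots\oplus k\del^{p^{n-1}}\oplus k(x\del)$ is useless, because each $\del^{p^{i}}$ with $i\geq1$ is a nonzero singular nilpotent: $(\del^{p^{i}})^{p^{n-1}}=\del^{p^{i+n-1}}=0\in\fL_{(0)}$. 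Thus $V$ has to be assembled from twisted lifts of $\del,\del^{p},\dots,\del^{p^{n-1}}$, and this is exactly where the Zassenhaus presentation $\{e_\alpha\mid\alpha\in\F_q\}$ with $[e_\alpha,e_\beta]=(\beta-\alpha)e_{\alpha+\beta}$ is needed: in these coordinates one can write down a complement to $\fL_{(1)}$ and, using the clean bracket together with \thref{l:l13}, \thref{c:c1}, \thref{p:p1} and \thref{c:c2}, compute $\ccD^{p^{n-1}}\bmod\fL_{(0)}$ for a general $\ccD\in V$.

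The main obstacle is the verification that $V\cap\cN_\text{sing}=\{0\}$: one must prove that every nonzero nilpotent $\ccD\in V$ is in fact regular, i.e.\ $\ccD^{p^{n-1}}\notin\fL_{(0)}$ in the sense of \thref{l:l14}. Unlike \thref{p:p1}, where the element has already been normalised, here the iterated $p$-power of an arbitrary member of the $(n+1)$-parameter family $V$ must be controlled via Jacobson's formula, and the whole point of choosing $V$ inside the $e_\alpha$-coordinates is to keep this combinatorial computation tractable enough to certify non-vanishing modulo $\fL_{(0)}$.
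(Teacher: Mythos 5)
There is a genuine gap: your proposal contains the reduction and the dimension count, but not the proof. The reduction (find an $(n+1)$-dimensional $V$ with $V\cap\cN_\text{sing}=\{0\}$, then apply the projective dimension theorem) is exactly the strategy the paper announces immediately after stating \thref{l:l16}, and your projective intersection computation $\dim\cN_\text{sing}\leq p^n-2$ is correct; likewise your observations that $\cN_\text{sing}$ is a closed cone, that $\fL_{(1)}\subseteq\cN_\text{sing}$ forces $V$ to be a complement of $\fL_{(1)}$, and that the naive complement fails because $(\del^{p^i})^{p^{n-1}}=0$ for $i\geq 1$. But the construction of $V$ and the verification $V\cap\cN_\text{sing}=\{0\}$ --- which constitute essentially the entire content of the proposition, together with the preparatory \thref{l:l17}, \thref{l:l18} and \thref{rkV} --- are deferred with ``one can write down a complement'' and ``keep this combinatorial computation tractable.'' Nothing is actually written down or computed, so no dimension bound has been proved.

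Moreover, the direction you gesture toward is not the one that works. You propose assembling $V$ from ``twisted lifts'' of $\del,\del^p,\dots,\del^{p^{n-1}}$ and controlling $\ccD^{p^{n-1}}$ via the normalisation results \thref{l:l13}, \thref{c:c1}, \thref{p:p1} and \thref{c:c2}; those lemmas normalise a \emph{single} element up to $G$-conjugacy and are of no help for a simultaneous statement about every element of an $(n+1)$-parameter family, since the conjugating automorphism changes from element to element. The paper's key idea is different in kind: take $V:=T\oplus ku$, where $T=\langle e_0\rangle_p=\sum_{i=0}^{n-1}ke_0^{p^i}$ is the $n$-dimensional torus generated by $e_0$ in the Zassenhaus basis and $u$ is a toral element spanning the $\sigma$-eigenspace $\fL[0]$. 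Thus $V$ is built from \emph{semisimple} elements, and the point is that a nonzero $y=\sum\lambda_ie_0^{p^i}+\mu u\in V$ either has $y^{p^{n-1}}\notin\fL_{(0)}$ (when $\lambda_0\neq 0$, by periodicity of the $p$-map on $T$) or is not nilpotent at all (when $\lambda_0=0$ and some $\lambda_s\neq 0$, one gets $y^{p^{2n-s-1}}\neq 0$ with $2n-s-1\geq n$; when all $\lambda_i=0$, $y=\mu u$ is toral). The Jacobson cross terms $v_l$ are forced into $\fL_{(0)}$ not by any normalisation, but by a weight argument: $e_0$ and $u$ are eigenvectors of the diagonalizable automorphism $\sigma(e_\alpha)=\xi^{-1}e_{\xi\alpha}$ with eigenvalues $\xi^{-1}$ and $1$, so each commutator is a $\sigma$-eigenvector whose eigenvalue $\xi^{-(a_0+\cdots+a_t)}$ differs from $\xi^{-1}$, hence lies in $\fL_{(0)}$ by \thref{rkV}. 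This semisimplicity-plus-grading mechanism is the missing idea; without it (or a genuine substitute) your outline does not yield the proposition.
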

Clearly, $\cN_{\text{sing}}$ is Zariski closed in $\cN$. To prove this proposition, we need to construct an $(n+1)$-dimensional subspace $V$ in $\fL_{p}$ such that $V \cap \cN_{\text{sing}}=\{0\}$. Then the result follows from \cite[Ch. I, Proposition~7.1]{H77}; see a similar proof in \cite{P91}. The way $V$ is constructed relies on the original definition of $\fL$ due to H.~Zassenhaus and the following lemmas. Recall that $\fL$ has a $k$-basis $\{e_{\alpha}\,|\, \alpha\in \F_q\}$ with the Lie bracket given by $[e_\alpha, e_\beta]=(\beta-\alpha)e_{\alpha+\beta}$. Here $\F_q\subset k$ is a finite field of $q=p^n$ elements. The multiplicative group $\F_q^{\times}$ of $\F_q$ is cyclic of order $p^n-1$ with generator $\xi$; see \S~\ref{2.1} for detail.

\begin{lem}\thlabel{l:l17}
Let $\sigma\in \GL(\fL)$ be such that $\sigma(e_\alpha):=\xi^{-1} e_{\xi\alpha}$ for any $\alpha \in\F_q$. Then $\sigma$ is a diagonalizable automorphism of $\fL$.
\end{lem}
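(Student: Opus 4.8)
The plan is to establish the two assertions separately: that $\sigma$ respects the Lie bracket, and that it is diagonalizable. Both reduce to elementary computations on the basis $\{e_\alpha \mid \alpha \in \F_q\}$ together with one structural fact about the order of $\sigma$.

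First I would verify that $\sigma$ is a Lie algebra homomorphism by a direct check on basis vectors, using the additivity of multiplication by $\xi$ on $\F_q$ (so that $\xi\alpha + \xi\beta = \xi(\alpha+\beta)$) and the bracket relation $[e_\alpha, e_\beta] = (\beta - \alpha)e_{\alpha+\beta}$. One computes
\[
[\sigma(e_\alpha), \sigma(e_\beta)] = \xi^{-2}[e_{\xi\alpha}, e_{\xi\beta}] = \xi^{-2}(\xi\beta - \xi\alpha)e_{\xi(\alpha+\beta)} = \xi^{-1}(\beta-\alpha)e_{\xi(\alpha+\beta)} = \sigma([e_\alpha,e_\beta]),
\]
so $\sigma$ preserves the bracket. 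Since $\alpha \mapsto \xi\alpha$ is a bijection of $\F_q$ (as $\xi \in \F_q^{\times}$), the map $\sigma$ carries the given basis to nonzero scalar multiples of the same basis, hence lies in $\GL(\fL)$ as asserted; combined with the bracket identity, this shows $\sigma$ is an automorphism.

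For diagonalizability, the key observation is that $\sigma$ has finite order coprime to $p$. I would record the orbit structure of the map $\alpha \mapsto \xi\alpha$ on $\F_q$: the element $0$ is fixed, while $\F_q^{\times}$ forms a single orbit of length $q-1$ because $\xi$ generates the cyclic group $\F_q^{\times}$. Iterating $\sigma$ then gives $\sigma^{q-1}(e_\alpha) = \xi^{-(q-1)}e_{\xi^{q-1}\alpha}$ for every $\alpha \in \F_q$, and since $\xi^{q-1} = 1$ in $\F_q$ this equals $e_\alpha$. Hence $\sigma^{q-1} = \id$ on all of $\fL$, treating $e_0$ and the nonzero orbit uniformly.

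Finally I would invoke the standard separability argument. Because $q = p^n$, we have $q - 1 \equiv -1 \pmod p$, so $\gcd(q-1, p) = 1$ and the polynomial $T^{q-1} - 1$ has nonzero derivative $(q-1)T^{q-2}$, making it separable; as $k$ is algebraically closed it factors into $q-1$ distinct linear factors. The minimal polynomial of $\sigma$ divides $T^{q-1} - 1$ and is therefore a product of distinct linear factors, so $\sigma$ is diagonalizable. I do not anticipate a genuine obstacle: the only points requiring care are confirming that $\xi$ generating $\F_q^{\times}$ makes the order computation uniform over all $\alpha$, and verifying $p \nmid q-1$ so that $T^{q-1}-1$ is separable and the semisimplicity of $\sigma$ follows.
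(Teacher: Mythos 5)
Your proposal is correct and follows essentially the same route as the paper: the identical bracket computation on basis vectors, followed by the observation that $\sigma^{p^n-1}=\id$ forces diagonalizability over the algebraically closed field $k$. Your explicit check that $p\nmid q-1$, so that $T^{q-1}-1$ is separable, is a welcome detail the paper leaves implicit (finite order alone would not suffice in characteristic $p$); otherwise the two arguments coincide.
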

\begin{proof}
By definition, 
\begin{align*}
[\sigma(e_\alpha), \sigma(e_\beta)]=[\xi^{-1} e_{\xi\alpha}, \xi^{-1} e_{\xi\beta}]=\xi^{-2}(\xi\beta-\xi\alpha)e_{\xi\alpha+\xi\beta}=\xi^{-1} (\beta-\alpha)e_{\xi(\alpha+\beta)}=\sigma([e_\alpha, e_\beta])
\end{align*}
for any $\alpha, \beta \in \F_q$. So the endomorphism $\sigma$ is an automorphism of $\fL$. Since $\xi^{p^n-1}=1$, we have that $\sigma^{p^n-1}=\id$. As $k$ is an algebraically closed field, the automorphism $\sigma$ is diagonalizable.
\end{proof}

Since $\sigma$ is an automorphism of $\fL$, it respects the natural filtration $\{\fL_{(i)}\}$ ($i\geq -1$) of $\fL$.
\begin{lem}\thlabel{l:l18}
The automorphism $\sigma$ acts as a scalar on each $1$-dimensional vector space $\fL_{(i)}/\fL_{(i+1)}$.
\end{lem}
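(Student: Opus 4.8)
The plan is to exploit the explicit eigenbasis of $\sigma$ provided by \thref{l:l17}. Since $\sigma$ is diagonalizable with $\sigma(e_\alpha)=\xi^{-1}e_{\xi\alpha}$, the natural thing is to understand how $\sigma$ permutes and scales the basis vectors $\{e_\alpha\}$, and then relate this to the filtration. First I would recall that under the identification of the two presentations of $\fL$ from \S\ref{2.1}, the grading $\fL=\bigoplus_{i=-1}^{p^n-2}kd_i$ with $d_i=x^{(i+1)}\partial$ gives $\fL_{(i)}/\fL_{(i+1)}\cong kd_i$, so each quotient is spanned by the image of a single homogeneous element $d_i$. Thus to prove $\sigma$ acts as a scalar on $\fL_{(i)}/\fL_{(i+1)}$, it suffices to show $\sigma(d_i)\equiv c_i\,d_i \pmod{\fL_{(i+1)}}$ for some scalar $c_i\in k$.

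The key step is to express the basis change between $\{e_\alpha\}$ and $\{d_i\}$ well enough to track the leading filtration term. I would use the standard fact (from the structure theory behind \cite[Theorem~7.6.3(1)]{S04}) that $e_\alpha$ can be written as a linear combination of the $d_i$ in a way compatible with the grading: concretely, each $d_i$ lies in the span of $\{e_\alpha\}$, and conversely the filtration degree of $e_\alpha$ is governed by the eigenvalue structure. Since $\sigma$ is already diagonal on the $e_\alpha$-basis and $\sigma$ respects the filtration (noted just before the lemma), the induced action of $\sigma$ on each one-dimensional quotient $\fL_{(i)}/\fL_{(i+1)}$ is automatically multiplication by a scalar — indeed, \emph{any} linear endomorphism of a one-dimensional space is multiplication by a scalar. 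This is the crucial simplification: because each graded piece $\fL_{(i)}/\fL_{(i+1)}$ is one-dimensional, and $\sigma$ preserves the filtration, $\sigma$ induces a well-defined linear map on each such quotient, which is necessarily scalar.

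So the argument is essentially immediate once the two facts are in place: (1) $\sigma$ respects the filtration $\{\fL_{(i)}\}$, which holds because $\sigma$ is an automorphism of $\fL$ and $G=\Aut(\fL)$ preserves the natural filtration; and (2) $\dim\bigl(\fL_{(i)}/\fL_{(i+1)}\bigr)=1$ for each $i$, which is visible from the grading $\fL_{(i)}=\bigoplus_{j\geq i}kd_j$. Given these, $\sigma$ descends to an endomorphism of the one-dimensional quotient, and every endomorphism of a one-dimensional vector space is a scalar. I would phrase the proof as: since $\sigma$ preserves the filtration, it induces a linear endomorphism $\bar\sigma_i$ on $\fL_{(i)}/\fL_{(i+1)}$; as this quotient is one-dimensional, $\bar\sigma_i$ is multiplication by a scalar.

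I do not anticipate a genuine obstacle here, since the statement follows formally from filtration-preservation plus one-dimensionality of the graded pieces. The only point requiring a line of justification is that $\sigma$ really does preserve the filtration — but this is stated in the sentence immediately preceding the lemma, so it may be invoked directly. If one wanted the explicit scalars (presumably needed in later arguments constructing $V$), the mild extra work would be to compute the eigenvalue of $\sigma$ on the line $\fL_{(i)}/\fL_{(i+1)}$ in terms of $\xi$; I would do this by matching the filtration degree $i$ to the appropriate power of $\xi$ via the change of basis, but this refinement is not needed for the bare assertion that the action is scalar.
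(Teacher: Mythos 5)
Your proof is correct for the literal statement, but it takes a genuinely different — and far more economical — route than the paper, and the economy comes at a price. You observe that $\sigma$ preserves the natural filtration (as any automorphism of $\fL$ does) and that each quotient $\fL_{(i)}/\fL_{(i+1)}$ is one-dimensional, so the induced map is automatically multiplication by a scalar; that is valid, and it exposes the fact that the lemma \emph{as stated} is formally trivial. The paper's proof, by contrast, is really a computation of \emph{which} scalar occurs on each quotient: it introduces the torus $T=\langle e_0\rangle_p$, shows $\dim_k T=n$ by counting eigenvalues of $\ad(e_0)$, argues that $e_0\notin\fL_{(0)}$ — a step that is not formal, since it uses that any torus of $\fL_{(0)}$ is at most one-dimensional while $T$ is $n$-dimensional and $\fL_{(0)}$ is restricted — deduces that $\sigma$ acts as $\xi^{-1}\id$ on $\fL_{(-1)}/\fL_{(0)}$, and then propagates eigenvalues up the filtration by bracketing with $e_0$ to obtain $\xi^{k}\id$ on $\fL_{(k)}/\fL_{(k+1)}$. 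These explicit values are precisely what is used immediately afterwards: \thref{rkV} (``the last lemma shows that\dots'') reads off the eigenvalue multiplicities, the existence of a toral element $u\in\fL[0]\setminus\fL_{(1)}$, and the structure of $\fL[-1]$, all of which feed into the construction of the subspace $V$ in the proof of \thref{l:l16}. You flag the computation of the scalars as ``mild extra work'' not needed for the bare assertion; that is true, but that work — in particular the claim $e_0\notin\fL_{(0)}$ and the induction along the filtration — is the entire substance of the paper's proof, and with only your argument in place the remark and proposition that follow would have nothing to cite.
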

\begin{proof}
Let $T$ denote the torus of the $p$-envelope $\langle e_{0}\rangle_{p}$ in $\fL_p$ generated by $e_{0}$. Let $\F_p\subset k$ denote the finite field with $p$ elements. By \cite[Theorem~1.3.11(1)]{S04}, $\dim_k T$ is the $\F_p$-dimension of the $\F_p$-vector space spanned by the $T$-roots; see also the proof of \cite[Theorem~7.6.3(2)]{S04}. Since $[e_0, e_\beta]=\beta e_\beta$ for any $\beta\in \F_q$, the endomorphism $\ad(e_{0})$ has $q=p^n$ distinct eigenvalues. Therefore, $\dim_k T=n$. As $\sigma(e_0)=\xi^{-1}e_0$ and $\sigma(e_0^{p^{j}})=\xi^{-p^{j}}e_0^{p^{j}}$ for all $j\geq 1$, we see that $\xi^{-1}, \xi^{-p}, \xi^{-p^2},\dots, \xi^{-p^{n-1}}$ are the eigenvalues of $\sigma$ on $T$. Note that $e_{0}\notin \fL_{(0)}$. Indeed, if $e_0\in \fL_{(0)}$, then $T$ is contained in $\fL_{(0)}$ as $\fL_{(0)}$ is restricted. But this contradicts the fact that any torus of $\fL_{(0)}$ has dimension $1$ \cite[p. 67]{T98}. Therefore, $e_{0}\notin \fL_{(0)}$. As a result, $T\cap \fL=ke_0$.

Consider the surjective map $\pi: \fL_{(-1)} \twoheadrightarrow \fL_{(-1)}/\fL_{(0)}$. Since $e_{0}\notin \fL_{(0)}$, the vector space $\fL_{(-1)}/\fL_{(0)}=k\del$ is spanned by $\pi(e_0)$. This implies that $e_0$ has weight $-1$ and $\sigma$ acts on $\fL_{(-1)}/\fL_{(0)}$ as $\xi^{-1}\id$. Similarly, we can show that $\sigma$ acts on $\fL_{(k)}/\fL_{(k+1)}$ as $\xi^{k}\id$ for $0\leq k\leq p^n-2$. Indeed, elements of $\fL_{(k)}/\fL_{(k+1)}$ have the form $x+\fL_{(k+1)}$ for some $x\in \fL_{(k)}$. Since $e_0\in \fL_{(-1)}\setminus \fL_{(0)}$, we have that $[e_0, \fL_{(k+1)}]\subseteq [\fL_{(-1)}, \fL_{(k+1)}]\subseteq\fL_{(k)}$. Hence $[e_0, \fL_{(k+1)}]+\fL_{(k+1)}=\fL_{(k)}$. In particular, $[e_0, \fL_{(0)}]+\fL_{(0)}=\fL_{(-1)}$. If $u\in \fL_{(0)}$ is such that $[e_0, u]\notin \fL_{(0)}$, i.e. $[e_0, u]\neq 0$ on $\fL_{(-1)}/\fL_{(0)}$, then $u$ is an eigenvector of $\sigma$ corresponding to an eigenvalue, say $\lambda$. Then
\begin{align*}
\sigma[e_0, u]=[\sigma(e_0), \sigma(u)]=[\xi^{-1}e_0, \lambda u]=\xi^{-1}\lambda[e_0, u].
\end{align*}
So $\xi^{-1}\lambda$ is the eigenvalue of $\sigma$ on $\fL_{(-1)}/\fL_{(0)}$. But $\sigma$ acts on $\fL_{(-1)}/\fL_{(0)}$ as $\xi^{-1}\id$, we must have that $\xi^{-1}\lambda=\xi^{-1}\id$. Thus $\lambda =1$, i.e. $\sigma$ acts on $\fL_{(0)}/\fL_{(1)}$ as $\id$. Continue in this way, one can show that $\sigma$ acts on $\fL_{(1)}/\fL_{(2)}$ as $\xi \id$, on $\fL_{(k)}/\fL_{(k+1)}$ as $\xi^k\id$ and on $\fL_{(p^n-2)}$ as $\xi^{p^n-2}\id=\xi^{-1}\id$. This completes the proof.
\end{proof}

\begin{rmk}\thlabel{rkV}
The last lemma shows that
\begin{enumerate}[\upshape(i)]
\item the eigenvalues of $\sigma$ on $\fL$ are $\xi^{-1}, \xi^0=1, \xi, \dots, \xi^{p^{n}-3}$ and $\xi^{p^{n}-2}=\xi^{-1}$. All have multiplicity $1$ except $\xi^{-1}$ which has multiplicity $2$; 
\item the eigenvalues of $\sigma$ on $\fL_{(0)}$ are $\xi^0=1, \xi, \dots, \xi^{p^{n}-3}$ and $\xi^{p^{n}-2}=\xi^{-1}$. All have multiplicity $1$;
\item the eigenspace $\fL[k]:=\{\ccD\in \fL\,|\,\sigma(\ccD)=\xi^{k}\ccD\}$ corresponding to the eigenvalue $\xi^{k}$, where $0\leq k\leq p^{n}-3$, has dimension $1$. In particular, the eigenspace $\fL[0]=ku$, which is a torus in $\fL_{(0)}$. Since any torus has a basis consisting of toral elements, we may assume that $u$ is toral, i.e. $u^p=u$;
\item the eigenspace $\fL[-1]=\text{span $\{e_0, v\,| \,v\in \fL_{(p^{n}-2)}\}$}$ and has dimension $2$.
\end{enumerate}
\end{rmk}

\begin{proof}[Proof of \thref{l:l16}] 
Recall that the $n$-dimensional torus  $T=\langle e_0\rangle_p$ in $\fL_p$ and the toral element $u\in \fL_{(0)}\setminus \fL_{(1)}$; see \thref{l:l18} and \thref{rkV}(iii). Put $V:=T\oplus ku=\sum_{i=0}^{n-1}ke_0^{p^{i}}\oplus ku$. We want to show that $V\cap \cN_{\text{sing}}=\{0\}$.

Suppose for contradiction that $V\cap \cN_{\text{sing}}\neq\{0\}$. Then take a nonzero element $y$ in $V\cap \cN_{\text{sing}}$, we can write
\begin{align*}
y=\sum_{i=0}^{n-1}\lambda_{i}e_{0}^{p^{i}}+\mu u
\end{align*}
for some $\lambda_i, \mu\in k$ with at least one $\lambda_i\neq 0$. Suppose $\lambda_0\neq 0$. Since $e_{0}\in \fL_{(-1)}\setminus\fL_{(0)}$, we may assume without loss of generality that $e_0=\del+\sum_{i=1}^{p^{n}-1}\alpha_{i}x^{(i)}\del$ for some $\alpha_{i}\in k$. As the $p$-th power map on $T$ is periodic, i.e. $(e_0^{p^{k}})^{p^{n-1}}=e_0^{p^{k-1}}$ for all $k\geq 1$, this implies that 
\[y^{p^{n-1}}=\lambda_0^{p^{n-1}}\del^{p^{n-1}}+\sum_{i=0}^{n-2}\lambda_i'\del^{p^{i}}+w
\]
for some $\lambda_i'\in k$ and $w\in \fL_{(0)}$. Since $\lambda_0\neq 0$, this shows that $y^{p^{n-1}}$ is not in $\fL_{(0)}$, a contradiction.

Suppose now $\lambda_0=0$ and let $1\leq s\leq n-1$ be the largest index such that $\lambda_s\neq 0$. Then 
\[
y=\sum_{i=1}^{s}\lambda_{i}e_{0}^{p^{i}}+\mu u. 
\]
By \cite[Lemma~1.1.1]{S04} and the fact that $e_0^{p^{n}}=e_0$, we have that 
\begin{align}\label{ypn-s}
y^{p^{n-s}}=\lambda_{s}^{p^{n-s}}e_{0}+\lambda_{s-1}^{p^{n-s}}e_{0}^{p^{n-1}}+\dots+\lambda_{1}^{p^{n-s}}e_{0}^{p^{n-s+1}}+\mu^{p^{n-s}}u+\sum_{l=0}^{n-s-1}v_l^{p^{l}},
\end{align}
where $v_l$ is a linear combination of commutators in $e_0^{p^{j}}$ ($1\leq j\leq s$) and $u$. By the Jacobi identity, we can rearrange each $v_l$ so that
\begin{align*}
v_l \in \text{span$\{[e_0^{a_0}[u^{b_1}[e_0^{a_1}[u^{b_2}[e_0^{a_2}[\dots[e_0^{a_{t-1}}[u^{b_t}[e_0^{a_t}, u]\dots]\}$}, 
\end{align*}
where $[e_0^{a_0}[u^{b_1}[e_0^{a_1}[u^{b_2}[e_0^{a_2}[\dots[e_0^{a_{t-1}}[u^{b_t}[e_0^{a_t}, u]\dots]$ is a left normed commutator of length $p^{n-s-l}$ with $u$ at the right end, and $p\leq a_{i}\leq p^{s}, b_i$ are arbitrary constants. Since $e_{0}$ and $u$ are eigenvectors of $\sigma$ corresponding to eigenvalues $\xi^{-1}$ and $\xi^0=1$, respectively, the commutator $[e_0^{a_0}[u^{b_1}[e_0^{a_1}[u^{b_2}[e_0^{a_2}[\dots[e_0^{a_{t-1}}[u^{b_t}[e_0^{a_t}, u]\dots]$ is an eigenvector of $\sigma$ corresponding to the eigenvalue $\xi^{-(a_0+a_1+\dots+a_t)}$. As 
\[
a_0+a_1+\dots+a_t\leq (p^{n-s-l}-1)p^s\leq (p^{n-s}-1)p^s=p^n-p^s\leq p^n-p\neq 1,
\]
the eigenvalue $\xi^{-(a_0+a_1+\dots+a_t)}$ is not equal to $\xi^{-1}$. Hence $v_l \in \fL_{(0)}\setminus \fL_{(p^n-2)}\subset \fL_{(0)}$. Since $\fL_{(0)}$ is restricted, we have that $v_l^{p^{l}}\in \fL_{(0)}$ and so $\sum_{l=0}^{n-s-1}v_l^{p^{l}}\in \fL_{(0)}$. As $e_0$ is not in $\fL_{(0)}$, this shows that the term $\sum_{l=0}^{n-s-1}v_l^{p^{l}}$ does not cancel with the first term $\lambda_{s}^{p^{n-s}}e_{0}$ in $y^{p^{n-s}}$ \eqref{ypn-s}. Therefore, 
\begin{align*}
y^{p^{n-s}}=\lambda_{s}^{p^{n-s}}e_{0}+\lambda'_{n-1}e_{0}^{p^{n-1}}+\dots+\lambda'_{n-s+1}e_{0}^{p^{n-s+1}}+w_1
\end{align*}
for some $\lambda'_{i}\in k$ and $w_1\in \fL_{(0)}$. Then we know that $(y^{p^{n-s}})^{p^{n-1}}=y^{p^{2n-s-1}}$ belongs to $\lambda_{s}^{p^{2n-s-1}}e_{0}^{p^{n-1}}+\sum_{i=0}^{n-2}\fL^{p^{i}}$. As $\lambda_s\neq 0$, this implies that $y^{p^{2n-s-1}}$ is not equal to $0$. But $2n-s-1\geq n$, this contradicts that $y$ is nilpotent. Therefore, we proved by contradiction that $V\cap \cN_{\text{sing}}=\{0\}$. The result then follows from \cite[Ch. I, Proposition~7.1]{H77}. This completes the proof.
\end{proof}

\begin{thm}
The variety $\cN$ is irreducible.
\end{thm}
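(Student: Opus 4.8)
The plan is to assemble the final irreducibility statement from the two structural results already in hand, via a standard dimension count. First I would recall from \S\ref{2.1} that the variety $\cN$ is equidimensional of dimension $p^n-1$; in particular every irreducible component of $\cN$ has dimension exactly $p^n-1$. By \thref{l:l15}, the Zariski closure $\overline{\cN_\text{reg}}$ is one such irreducible component, and hence has dimension $p^n-1$.

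Next I would show that it is the \emph{only} component. Suppose, for contradiction, that $Z$ is an irreducible component of $\cN$ with $Z\neq\overline{\cN_\text{reg}}$. Since distinct irreducible components of a Noetherian space cannot contain one another, $Z\not\subseteq\overline{\cN_\text{reg}}$, so $Z\setminus\overline{\cN_\text{reg}}$ is a nonempty open subset of the irreducible space $Z$, hence dense in $Z$; by equidimensionality this gives $\dim\big(Z\setminus\overline{\cN_\text{reg}}\big)=\dim Z=p^n-1$. On the other hand, since $\cN_\text{reg}\subseteq\overline{\cN_\text{reg}}$, we have the inclusion $Z\setminus\overline{\cN_\text{reg}}\subseteq\cN\setminus\cN_\text{reg}=\cN_\text{sing}$, so that $\dim\cN_\text{sing}\geq p^n-1=\dim\cN$. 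This directly contradicts \thref{l:l16}, which asserts $\dim\cN_\text{sing}<\dim\cN$. Therefore $\overline{\cN_\text{reg}}$ is the unique irreducible component of $\cN$, i.e. $\cN=\overline{\cN_\text{reg}}$, and $\cN$ is irreducible. Combined with \thref{l:l14}, which identifies $\cN_\text{reg}=G.(k^*\del+k\del^p+\dots+k\del^{p^{n-1}})$, this simultaneously yields the description of $\cN$ in \thref{main}.

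I expect no serious obstacle here: the entire mathematical content has been front-loaded into \thref{l:l15} (producing a full-dimensional irreducible component) and \thref{l:l16} (bounding the complement), and what remains is the routine fact that an equidimensional variety possessing a component of full dimension whose complement has strictly smaller dimension must consist of that single component. The only point demanding a little care is to run the set-difference against the \emph{closure} $\overline{\cN_\text{reg}}$ rather than against $\cN_\text{reg}$ itself, using $\cN_\text{reg}\subseteq\overline{\cN_\text{reg}}$ to guarantee that the leftover locus lands inside $\cN_\text{sing}$ and is thereby controlled by \thref{l:l16}.
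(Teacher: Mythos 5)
Your proposal is correct and follows essentially the same route as the paper: invoke \thref{l:l15} for a full-dimensional component, use equidimensionality plus the inclusion $Z\setminus\overline{\cN_\text{reg}}\subseteq\cN_\text{sing}$ and \thref{l:l16} to rule out a second component. The only cosmetic difference is that the paper passes to the closure of $Z_2\setminus Z_1$ inside the closed set $\cN_\text{sing}$, whereas you argue directly with the dimension of the dense open subset; these are interchangeable.
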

\begin{proof}
The variety $\cN$ is equidimensional of dimension $p^n-1$. The ideal defining $\cN$ is homogeneous, hence any irreducible component of $\cN$ contains $0$ \cite[Theorem 4.2]{P03}.
It follows from \thref{l:l15} that the Zariski closure of $\cN_\text{reg}$ is an irreducible component of $\cN$. Let $Z_1, \dots ,Z_t$ be pairwise distinct irreducible components of $\cN$, and set $Z_1=\overline{\cN_\text{reg}}$. Suppose $t\geq 2$. Then $Z_2\setminus Z_1$ is contained in $\cN_{\text{sing}}$, which is Zariski closed in $\cN$ with $\dim \cN_{\text{sing}}<\dim \cN$  by \thref{l:l16}. Since $Z_2\setminus Z_1=Z_2 \setminus (Z_1\cap Z_2)$, this set is Zariski dense in $Z_2$. Then its closure $Z_2$ is also contained in $\cN_{\text{sing}}$, i.e. $\dim Z_2=\dim \cN \leq \dim \cN_{\text{sing}}$. This is a contradiction. Hence $t=1$ and the variety $\cN$ is irreducible. This completes the proof of \thref{main}.
\end{proof}

\section*{Acknowledgements}
I would like to thank Professor Alexander Premet for his guidance and encouragement during my research.
I am very grateful to him for having read this paper carefully and suggested several improvements. I would also like to acknowledge the support of a school scholarship from the University of Manchester.

\end{document}